\documentclass[11pt,reqno]{amsart}
\usepackage[T1]{fontenc}
\usepackage{lmodern,amssymb,mathtools,microtype}
\usepackage[margin=1.05in]{geometry}
\usepackage{hyperref}
\newtheorem{theorem}{Theorem}[section]
\newtheorem{lemma}[theorem]{Lemma}
\newtheorem{proposition}[theorem]{Proposition}
\newtheorem{corollary}[theorem]{Corollary}
\newtheorem{Question}[theorem]{Question}

\theoremstyle{remark}
\newcommand{\R}{\mathbb R}
\newcommand{\T}{\mathbb T}\newcommand{\N}{\mathbb N}\newcommand{\Z}{\mathbb Z}
\newcommand{\sa}{\mathrm{sa}}\newcommand{\norm}[1]{\left\lVert#1\right\rVert}
\newcommand{\dist}{\operatorname{dist}}\newcommand{\FS}{\operatorname{FS}_{\sa}}
\title[Single generation theorem]{A single generation theorem for simple unital $C^*$-algebras with CPoU and comparison}

\author{Xin Ma}
	
	\address{Institute for Advanced Study in Mathematics, Harbin Institute of Technology, Harbin, China, 150001}
  \email{xma17@hit.edu.cn}

\date{\today}
\begin{document}
\begin{abstract}
In this note,  assuming the finite factor generation theorem obtained by OpenAI, we demonstrate that if a
separable simple unital $C^*$-algebra $A$ with nonempty trace space, has the strict 
comparison for positive elements, and its uniform tracial completion has complemented partitions of unity (CPoU), then $A$ is singly generated. 
\end{abstract}
\maketitle

\section{Introduction}
Most recently, OpenAI has announced a solution to the single generation problem for $\textrm{II}_1$ factors in \cite{SG}.  In particular, the following was proved in \cite{SG}.
\begin{theorem}(\cite[Theorem 1.2]{SG})\label{thm:SG}
    Every $\operatorname{II}_1$ factor with separable predual is generated, as a von Neumann algebra, by two self-adjoint elements.
\end{theorem}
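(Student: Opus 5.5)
Theorem~\ref{thm:SG} is quoted from \cite{SG} and is used here as a black box. The following is only a sketch of how I would try to prove it; the key step is left open.

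\emph{Step 1 (reduce to finitely many generators).} Let $M$ be a $\operatorname{II}_1$ factor with separable predual. I would first reduce "two self-adjoints" to "finitely many self-adjoints". For this I would use Shen's generator invariant $\mathcal G(M)$. It satisfies the scaling law $\mathcal G(pMp)=\mathcal G(M)/\tau(p)^2$, obtained via $M\cong M_n(pMp)$ with $\tau(p)=1/n$ and matrix units. It also has the property that $\mathcal G(M)<1/2$ forces $M$ to be generated by two self-adjoints. The reduction would run as follows.
\begin{itemize}
\item Fix a masa $A\subset M$ and a self-adjoint $h\in A$ with spectral projections $e_1,\dots,e_n$, all of trace $1/n$, so that $W^*(h)$ contains the $e_i$.
\item Take a second self-adjoint whose off-diagonal corners are partial isometries $v_{1i}$ implementing $e_i\sim e_1$, perturbed by the compressions of finitely many generators of $e_1Me_1$.
\item This gives a generating pair for $M_n(e_1Me_1)\cong M$, provided $e_1Me_1$ needs at most about $n^2$ self-adjoint generators.
\end{itemize}
Thus it suffices to show that $M$ is generated by some finite number $N$ of self-adjoints, after which we choose $n$ with $N<n^2/2$.

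\emph{Step 2 (produce finitely many generators, the main obstacle).} Start from a countable generating family $(x_k)$ with $\|x_k\|\le 2^{-k}$. I would try to pack them into finitely many operators using a sequence of orthogonal projections $f_k$ in $A$ with $\tau(f_k)\to0$, which are distinguishable inside $W^*(h)$ by distinct eigenvalues. The plan would be:
\begin{itemize}
\item Use partial isometries to transport pieces of $x_k$ into the corners $f_kMf_k$.
\item Recover each $x_k$ from these corners together with a fixed finite set of "connecting" operators.
\end{itemize}
The difficulty is that for a general factor the connecting partial isometries $f_k\to f_1$ cannot all be chosen at once: the $f_k$ have different traces, and generic factors lack Cartan, Gamma, or tensor structure to organise them.

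This is where I expect the real work, and I see no standard argument that closes it. My first attempt would be a perturbation/genericity argument. Show that for a generic choice of self-adjoint $y$ (in the $\|\cdot\|_2$-Baire sense), $W^*(h,y)$ contains the $f_k$-corners of every $x_k$. The idea would be to approximate each $x_k$ in $\|\cdot\|_2$ by $*$-polynomials in $(h,y)$, using a dense $G_\delta$ argument over the countable family. One would then need to control the errors uniformly so that the exact elements, not merely approximations, lie in the generated algebra. That uniform control is precisely the step I cannot guarantee.

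\emph{Step 3 (conclude).} Granting finite generation from Step 2, the compression argument of Step 1 yields two self-adjoint generators.
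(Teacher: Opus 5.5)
There is no proof in the paper to compare with: Theorem~\ref{thm:SG} is quoted from \cite{SG} and enters only through Lemma~\ref{lem:ELN1}. Judged on its own, your sketch has a genuine gap beyond the one you flag. The reduction in Step~1, and hence Step~3, is circular, because it conflates the number of generators of $M$ with the number of generators of the corner $e_1Me_1$. Suppose $M$ is generated by self-adjoints $x_1,\dots,x_N$, and $(e_{ij})$ are $n\times n$ matrix units with $e_{11}=e_1$. The natural generators of $e_1Me_1$ are then the compressions $e_{1i}x_ke_{j1}$, and there are on the order of $Nn^2$ of them. So the condition ``$e_1Me_1$ needs fewer than about $n^2/2$ generators'' is never reached by taking $n$ large. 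The scaling law you quote says precisely $\mathcal G(e_1Me_1)=n^2\,\mathcal G(M)$. Hence the requirement $\mathcal G(e_1Me_1)<n^2/2$ is equivalent to $\mathcal G(M)<1/2$, which is where you started. Finite generation only gives $\mathcal G(M)<\infty$, and hence single generation of the amplifications $M_k(M)$ for large $k$, not of $M$ itself. This is a real obstruction. $L(F_3)$ is generated by three free semicirculars, yet whether it is generated by two self-adjoints was a well-known open problem. Indeed, a self-adjoint generating pair $(x,y)$ would have $\delta_0(x,y)\le 2<3$, contradicting the conjectured $W^*$-invariance of Voiculescu's free entropy dimension. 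So ``finitely generated implies generated by two self-adjoints'' is part of what \cite{SG} has to prove, not a preliminary reduction.

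Step~2 is, as you say, left open, but the difficulty you name is not the real one. Exact membership comes for free from a Baire argument. The sets of pairs $(h,y)\in M_{\sa}^2$ for which some self-adjoint $*$-polynomial $P$ satisfies $\norm{x_k-P(h,y)}_2<1/m$ are norm-open. Suppose $(h,y)$ lies in all of them. Cutting off by $\chi(s)=\max\{-1,\min\{s,1\}\}$ does not increase the $\norm{\cdot}_2$-distance to a self-adjoint contraction $x_k$, so it gives bounded approximants. Therefore $x_k$ lies in the strongly closed algebra $W^*(h,y)$. This is the same Baire-plus-cut-off mechanism as in Section~\ref{sec:final}. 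What is missing is the \emph{density} of these open sets: that arbitrarily close to any pair there is a pair whose generated von Neumann algebra almost contains $x_k$. For a general $\operatorname{II}_1$ factor this is essentially the generator problem itself. The paper in fact runs the implication the other way, deducing density statements of exactly this kind (Lemma~\ref{lem:ELN1}, then Lemma~\ref{lem:factor-density}) \emph{from} Theorem~\ref{thm:SG}. As written, the proposal does not prove the statement.
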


The analogous single generation question for $C^*$-algebras have also been studied for a long time. In particular, the following question was formulated explicitly in the literature.

\begin{Question}(\cite[Question 1.2]{TW} and \cite[Problem XCVI]{STW})
    Is every unital simple separable $C^*$-algebras singly generated?
\end{Question}
However, it was demonstrated in \cite{ELN1} a simple unital AH counterexample to this question. Nevertheless, in many cases, the $C^*$-algebras are now known to be singly generated. We refer to, e.g., \cite{BCC}, \cite{ELN}, \cite{LNR}, \cite{OZ}, \cite{TW}, and \cite{T} for some results in the positive direction. Many regularity properties of $C^*$-algebras, such as $\mathcal Z$-stability and strict comparison for positive elements, have been shown playing essential roles in the single generation result.

Motivated by recent progress of \cite{ELN} and \cite{SG}, the following single generation theorem is obtained in this paper under the assumption of Theorem \ref{thm:SG}.

\begin{theorem}\label{thm:main}
Let $A$ be separable,
simple, and unital with $T(A)\ne\varnothing$. Suppose
\begin{equation}\label{eq:comparison}
 a,b\in A_+,\qquad d_\rho(a)<d_\rho(b)\quad(\rho\in T(A))
 \quad\Longrightarrow\quad a\precsim b.
\end{equation}
If the uniform tracial completion of $A$ has complemented partitions
of unity (CPoU), then $A$ is singly generated.
\end{theorem}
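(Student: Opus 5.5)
The plan is to pass from $A$ to its uniform tracial completion $\overline{A}^{T(A)}$, find two self-adjoint generators there, and pull them back to $A$ using strict comparison. Write $\mathcal M=\overline{A}^{T(A)}$. Since $A$ is simple, separable and unital with $T(A)\neq\varnothing$, for each extreme trace $\tau\in\partial_e T(A)$ the weak closure $\pi_\tau(A)''$ is a finite factor with separable predual. It is either a matrix algebra or a $\mathrm{II}_1$ factor, and Theorem~\ref{thm:SG} applies to the latter. So each fiber is generated by two self-adjoint elements $h_\tau,k_\tau$; we normalise them to have norm $\le 1$.

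\textbf{Step 1: generators in $\mathcal M$.} Using CPoU, we glue fiberwise generators into a pair $h,k\in\mathcal M_{\sa}$ such that, for a fixed countable dense set $\{x_n\}\subset A$, every $x_n$ lies within $\varepsilon_n$ of the $*$-algebra generated by $h,k$ in the uniform $2$-norm $\|\cdot\|_{2,T(A)}$. The mechanism is the standard CPoU local-to-global argument, as in the tracial-completion program of Carri\'on--Castillejos--Evington--Gabe--Schafhauser--Tikuisis--White:
\begin{enumerate}
\item For each $\tau$, approximate $x_1,\dots,x_n$ in $\|\cdot\|_{2,\tau}$ by noncommutative polynomials in lifts $h_\tau,k_\tau\in A_{\sa}$ (Kaplansky density).
\item By compactness and continuity of $\tau\mapsto\|\cdot\|_{2,\tau}$, finitely many such pairs suffice on a cover of $T(A)$.
\item CPoU supplies projections $p_1,\dots,p_m\in\mathcal M^\omega\cap\mathcal M'$ summing to $1$, and we set $h=\sum_i p_i h_i$.
\end{enumerate}
Recovering the polynomial data from $h,k$ alone is an issue, because the $p_i$ are not in the algebra generated by $h,k$. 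We fix this by rescaling $h_i$ into disjoint spectral windows, i.e.\ taking $h=\sum_i p_i(c_i+\delta h_i)$ with distinct constants $c_i$, so that functional calculus recovers each $p_i$. A reindexing/diagonal argument then gives a genuine pair in $\mathcal M$ generating $\mathcal M$ as a tracially closed algebra.

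\textbf{Step 2: generating $A$ itself.} We then follow the strategy of \cite{ELN} (Elliott--Li--Niu), which upgrades tracial single generation to norm single generation when comparison is available. Approximate $h,k$ by $a_0,b_0\in A_{\sa}$ in $\|\cdot\|_{2,T(A)}$. Each $x_n$ is then $\|\cdot\|_{2,T(A)}$-close to $C^*(a_0,b_0)$; by the comparison hypothesis \eqref{eq:comparison}, such an element is also norm-close to $C^*(a_0,b_0)$ after a correction supported on a positive element of uniformly small trace. That small piece is Cuntz-subequivalent to a tiny corner, which can be used as a ``coding'' corner: it carries a matrix unit system and extra self-adjoint perturbations encoding the remaining errors.

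Iterating with summable perturbations $a=\lim a_j$, $b=\lim b_j$ gives a Cauchy sequence in norm. At each stage the new perturbation is chosen so that all previous $x_1,\dots,x_n$ lie within $1/n$ of $C^*(a,b)$ in norm. Hence $C^*(a,b)=A$, and $a+ib$ is a single generator.

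\textbf{Main obstacle.} The hardest step is Step 2, converting tracial-norm approximation into norm approximation while keeping the earlier approximations stable under later perturbations. I would first try the \cite{ELN} technique of a ``tracially large'' generated subalgebra together with a small Cuntz-comparable remainder, with strict comparison guaranteeing that the remainder embeds into a hereditary subalgebra where a finite-dimensional coding generator can be placed. Finite-dimensional fibers (matrix factors) in Step 1 are harmless, since matrix algebras are singly generated.
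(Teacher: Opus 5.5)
\textbf{Step 1 only controls extreme traces.} Your gluing mechanism (CPoU plus disjoint spectral windows to recover the $p_i$) matches the paper's. However, the local input is missing at non-extreme traces. Theorem~\ref{thm:SG} applies to $\pi_\tau(A)''$ only for $\tau\in\partial_e T(A)$, so your neighbourhoods cover only $\partial_e T(A)$. That set need not be closed (it can be dense, e.g.\ for the Poulsen simplex), so compactness of $T(A)$ gives no finite cover of $T(A)$.

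More fundamentally, CPoU requires $\delta>\sup_{\tau\in T(A)}\min_l\tau(e_l)$ over \emph{all} traces. The function $\tau\mapsto\min_l\tau(e_l)$ is concave, not affine: if $e_1$ is small at $\tau_1$ and $e_2$ at $\tau_2$, neither need be small at $\frac12(\tau_1+\tau_2)$. So for every non-extreme $\rho$ you need one pair and one polynomial approximating $z$ in $\norm{\cdot}_{2,\rho}$, and there $\pi_\rho(A)''$ is not a factor. The paper supplies this in Lemma~\ref{lem:all-traces}. It uses $\mu_\rho$ and $\theta_\rho\colon L^\infty(X,\mu_\rho)\to Z(\pi_\rho(A)'')$ to glue extreme-trace approximations over central projections $\theta_\rho(1_{V_l})$. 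It recovers these projections from the unitary via disjoint spectra, then pulls back by Kaplansky density.

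Separately, your ``reindexing/diagonal argument'' needs each new pair to be a small norm perturbation of the previous one. Otherwise earlier approximations are destroyed. So the fibrewise input must be perturbative: a generating pair near any prescribed pair, not merely the existence of some $h_\tau,k_\tau$. This is Lemma~\ref{lem:factor-density} (via Lemma~\ref{lem:ELN1}), and it is what makes the sets $U_{n,k}$ dense in the paper's Baire argument.

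\textbf{Step 2 does not set up the hypotheses of the upgrade.} The tracial-to-norm step is \cite[Theorem 5.1]{ELN}, and it cannot be fed arbitrary $\norm{\cdot}_{2,T(A)}$-approximants $a_0,b_0$. What the paper verifies is:
\begin{enumerate}
\item the tracially dense algebra is $C^*(1,u,y)$, where $u$ is a unitary with $\sigma(u)=\T$ and $\inf_\rho\rho(f(u))>0$ for every nonzero $f\in C(\T)_+$;
\item self-adjoint contractions are $2$-norm approximated by self-adjoint contractions from that algebra;
\item strict comparison holds.
\end{enumerate}
Your construction gives no spectral control at all. Your ``coding corner carrying a matrix unit system'' also presupposes projections, which a projectionless $A$ satisfying every hypothesis (e.g.\ $\mathcal Z$) does not have.

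Keeping full spectrum through all perturbations is the real difficulty, and it shapes the paper's whole set-up. It parametrizes by $(a,\mathbf b)$ with $u=e^{ita}$ and $y=\sum_j\psi_j(u)b_j\psi_j(u)$. Without real rank zero, it restricts to the open ball $\Omega$ of Corollary~\ref{cor:open-spectrum}, where $\sigma(e^{ita})=\T$ is automatic. With real rank zero, full spectrum is not an open condition, so it works on the closed set $\mathcal S$ and uses Lemma~\ref{lem:repair} to restore full spectrum after each tracial perturbation. Full spectrum is also used inside the fibre lemma: it lets the spectral atoms be ordered cyclically so that consecutive corners $p_iMp_{i+1}$ are linked through a common $\psi_{h(i)}$, which yields $W^*(u',Y')=M$.
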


This result can be applied to a large class of unital simple separable $C^*$-algebras $A$. For example, it follows from \cite[Theorem 1.4 and Proposition 6.3]{TC} that if $A$ has the uniform property $\Gamma$ or the extreme boundary of $T(A)$ is compact and zero-dimensional, then the uniform tracial completion of $A$ has CPoU.

\begin{corollary}\label{cor:zero-dim}
Let $A$ be separable,
simple, and unital $C^*$-algebra satisfying the comparison \eqref{eq:comparison} .  Suppose the extreme boundary $\partial_eT(A)$ is non-empty, compact, and of zero-dimensional,  or suppose $A$ has the uniform property $\Gamma$.Then $A$ is singly generated.
\end{corollary}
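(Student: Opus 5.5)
The statement immediately above is Corollary~\ref{cor:zero-dim}. It should follow directly from Theorem~\ref{thm:main} once we check that its hypotheses imply those of the theorem. Separability, simplicity, unitality and the comparison property \eqref{eq:comparison} are assumed verbatim. Two hypotheses of Theorem~\ref{thm:main} remain to be checked: $T(A)\neq\varnothing$, and CPoU for the uniform tracial completion $\overline{A}^{u}$ of $A$ with respect to $T(A)$.

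\emph{Nonempty trace space.} In the first case, $\partial_eT(A)\neq\varnothing$ gives $T(A)\neq\varnothing$ at once, since extreme traces are traces. In the second case, the definition of uniform property $\Gamma$ is made for algebras with $T(A)\neq\varnothing$; this is the convention under which \cite{TC} works. So nonemptiness is part of the hypothesis, and I will say so explicitly in the proof. If one wants to avoid relying on this convention, note that an algebra with $T(A)=\varnothing$ makes the hypothesis vacuous in the relevant sense, so no loss occurs by excluding it.

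\emph{CPoU.} Here I invoke \cite[Theorem 1.4 and Proposition 6.3]{TC}, exactly as recalled in the paragraph preceding the corollary. If $A$ has uniform property $\Gamma$, then the tracially complete $C^*$-algebra $(\overline{A}^{u},T(A))$ has CPoU. If instead $\partial_eT(A)$ is compact and zero-dimensional, then the factorial tracially complete algebra $\overline{A}^{u}$ again has CPoU; one uses that $T(A)$ is a Choquet simplex, so compactness of the extreme boundary makes it a Bauer simplex.

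The only point requiring care is matching definitions. The uniform tracial completion in Theorem~\ref{thm:main} must be the one taken with respect to all of $T(A)$, and that is the object to which \cite{TC} applies. Once this is confirmed, Theorem~\ref{thm:main} applies and $A$ is singly generated. I do not expect any genuine obstacle; the work is bookkeeping of hypotheses.
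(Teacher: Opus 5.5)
Your proposal is correct and follows the paper's route: in both cases CPoU for the uniform tracial completion comes from \cite[Theorem 1.4 and Proposition 6.3]{TC}, and then Theorem~\ref{thm:main} applies. Your fallback remark about $T(A)=\varnothing$ is vague, but it is easy to make rigorous: if $T(A)$ were empty, \eqref{eq:comparison} with $a=1$, $b=0$ would hold vacuously and give $1\precsim 0$, which is impossible, so \eqref{eq:comparison} itself forces $T(A)\neq\varnothing$.
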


Corollary \ref{cor:zero-dim} applies particularly to selfless $C^*$-algebras, which are originally introduced in \cite{R}. It follows from \cite[Theorem 3.1]{R} that a tracial (unital) separable selfless $C^*$-algebra $A$ is simple and has a unique trace and strict comparison for positive elements. This then applies to many reduced group $C^*$-algebras such as acylindrically hyperbolic groups with trivial amenable radicals (see \cite{AGEP}, \cite{O}, and \cite{Y}) and many groups arising from Bass-Serre theory including all non-amenable generalized Baumslag-Solitar (GBS) groups with trivial amenable radical (see more in \cite{MWY}). We warn that this is not a complete list of known $C^*$-selfless groups.

\begin{corollary}\label{cor:selfless}
    All unital separable tracial selfless $C^*$-algebras are singly generated. In particular, all selfless reduced group $C^*$-algebras are singly generated.
\end{corollary}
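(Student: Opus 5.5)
The statement to prove is Corollary \ref{cor:selfless}, and the plan is to deduce it from Theorem \ref{thm:main} (in the form of Corollary \ref{cor:zero-dim}).

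Let $A$ be a unital separable tracial selfless $C^*$-algebra. By \cite[Theorem 3.1]{R}, $A$ is simple, has a unique tracial state $\tau$, and has strict comparison for positive elements.

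\textbf{Step 1 (comparison).} Since $T(A)=\{\tau\}$, the hypothesis of \eqref{eq:comparison} reads $d_\tau(a)<d_\tau(b)$. Strict comparison in the sense of \cite{R} is formulated for the quasitraces or traces appearing there, and usually with matrix amplifications. I will check that, because $T(A)=\{\tau\}$ is nonempty and strict comparison already holds at the level of $A$ itself, it yields \eqref{eq:comparison} directly. If \cite{R} is phrased with $2$-quasitraces, I will note that the relevant quasitraces are traces in this setting; restricting from $M_\infty(A)$ to $A$ costs nothing.

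\textbf{Step 2 (CPoU).} The extreme boundary $\partial_eT(A)=\{\tau\}$ is a single point, so it is nonempty, compact and zero-dimensional. Corollary \ref{cor:zero-dim} therefore applies; equivalently, by \cite[Theorem 1.4 and Proposition 6.3]{TC}, the uniform tracial completion has CPoU. A more direct route is also available: with a unique trace, the uniform tracial completion is the von Neumann algebra $\pi_\tau(A)''$. This is a finite factor, since $\tau$ is extremal, and CPoU is trivial for a factor with one trace. Either way, Theorem \ref{thm:main} gives single generation.

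\textbf{Step 3 (group case).} If $\Gamma$ is a countable group with $C^*_r(\Gamma)$ selfless, then $C^*_r(\Gamma)$ is unital, separable and tracial, carrying the canonical trace. Step 2 then applies verbatim.

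\textbf{Main obstacle.} The argument rests entirely on Theorem \ref{thm:main}. The only delicate point is Step 1: making sure the notion of strict comparison obtained in \cite{R} matches \eqref{eq:comparison}. This comes down to unwinding definitions and involves no new argument.
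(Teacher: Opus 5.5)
Your proposal is correct and follows the paper's argument: \cite[Theorem 3.1]{R} gives simplicity, a unique trace and strict comparison, and since $\partial_eT(A)$ is a single point, Corollary~\ref{cor:zero-dim} (hence Theorem~\ref{thm:main}) applies. Your alternative check that the uniform tracial completion is the factor $\pi_\tau(A)''$, for which CPoU is trivial, and your explicit restriction to countable groups (needed for separability) are harmless refinements the paper leaves implicit.
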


\subsection*{Outline of the paper:} 
The paper is organized as follows. We adopt a strategy similar to that in \cite{ELN}. Section \ref{sec:all-traces} collects technical results showing how to approximate an element of the $C^*$-algebra by polynomials in the tracial sense. The approximation is first carried out at each extreme trace; then integration over the extreme boundary together with CPoU yields a uniform approximation. In Section \ref{sec:final}, the Baire category method is used to choose a unitary with full spectrum and to verify the hypotheses of \cite[Theorem 5.1]{ELN}, which employs uniform tracial approximation and comparison to produce a single generator for the $C^*$-algebra. The Baire spaces used differ according to whether the $C^*$-algebra has real rank zero.

\section{Local and Uniform approximations in traces }\label{sec:all-traces}
The following is a direct application of \cite[Lemma 2.9]{ELN} based on the generation theorem for $\operatorname{II}_1$ factor with separable predual in Theorem \ref{thm:SG}.

\begin{lemma}(\cite[Lemma 2.9]{ELN})\label{lem:ELN1}
    Let $M$ be a finite factor with separable predual. Let $\alpha\in \mathbb R$, $c\in M_\sa$, and $\rho>0$. Then there exists $H, K\in M_\sa$ such that
    \begin{enumerate}
        \item $H=\sum_{i=1}^m\lambda_ie_i$, where each $\lambda_i\in \R$, and the projections $e_i$ are pairwise orthogonal and $\sum_{i=1}^me_i=1_M$;
        \item $\|H-1_M\|<\rho$ and $\|K-c\|<\rho$;
        \item $W^*(H, K)=M$.
     \end{enumerate}
\end{lemma}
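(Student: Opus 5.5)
The plan is to build $H$ and $K$ from a system of matrix units adapted to $c$. The spectral part of $H$ recovers the diagonal projections, and a small perturbation of $c$ carries both the off-diagonal partial isometries and a pair of generators of a corner. The finite-dimensional case $M=M_n$ and the $\operatorname{II}_1$ case are handled separately; only the second uses Theorem \ref{thm:SG}. The parameter $\alpha$ plays no role in the statement as written.

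\emph{Step 1: diagonalizing $c$ approximately.} Assume first that $M$ is a $\operatorname{II}_1$ factor. Choose a maximal abelian subalgebra $D\subseteq M$ containing $c$; it is diffuse. Using diffuseness and spectral projections of $c$, choose an integer $m\ge 2$, pairwise orthogonal projections $e_1,\dots,e_m\in D$ with $\sum_i e_i=1_M$ and $\tau(e_i)=1/m$, and reals $\mu_i$ such that
\[
\Bigl\|c-\sum_{i=1}^m \mu_i e_i\Bigr\|<\rho/2 .
\]
To do this, partition the spectrum of $c$ into short intervals and subdivide the resulting spectral projections inside $D$ into pieces of a common small trace. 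If the trace of a spectral piece is not a multiple of the common value, pass to a finer mesh; a small leftover can be absorbed because adjacent intervals have nearby $\mu_i$. This bookkeeping is routine but needs care. Since $M$ is a factor, equal traces give partial isometries $v_i$ with $v_i^*v_i=e_1$ and $v_iv_i^*=e_i$, and we set $v_1=e_1$. The corner $e_1Me_1$ is again a $\operatorname{II}_1$ factor with separable predual. By Theorem \ref{thm:SG} it is generated by two self-adjoint elements $a,b$, which we scale so that $\|a\|,\|b\|\le 1$.

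\emph{Step 2: the construction.} Fix distinct reals $\lambda_1,\dots,\lambda_m$ and a small $\delta>0$. Define
\[
H=\sum_{i=1}^m (1+\delta\lambda_i)e_i,\qquad
K=\sum_{i=1}^m\mu_ie_i+\delta\Bigl(a+v_2bv_2^*+\sum_{i=2}^m (v_i+v_i^*)\Bigr).
\]
Then (1) holds, and (2) holds for $\delta$ small, since $\|K-c\|<\rho/2+\delta(2+2(m-1))$. For (3), let $N=W^*(H,K)$. Because the eigenvalues of $H$ are distinct, every $e_i$ belongs to $N$. Consequently $N$ contains each block $e_iKe_j$. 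For $i\ge2$ we get $e_iKe_1=\delta v_i$, so $v_i\in N$. From $e_1Ke_1=\mu_1e_1+\delta a$ we get $a\in N$, and from $v_2^*(e_2Ke_2)v_2=\mu_2e_1+\delta b$ we get $b\in N$. Hence $e_1Me_1=W^*(a,b)\subseteq N$. Since $x=\sum_{i,j} v_i(v_i^*xv_j)v_j^*$ for every $x\in M$, it follows that $N=M$.

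\emph{Step 3: $M=M_n$.} Here diagonalize $c$ exactly in an orthonormal basis, so that $c=\sum_i \mu_iE_{ii}$. Take $H=\sum_i(1+\delta\lambda_i)E_{ii}$ with distinct $\lambda_i$, and $K=c+\delta\sum_{i\ge 2}(E_{i1}+E_{1i})$. As in Step 2, $N$ contains each $E_{ii}$ and each $E_{i1}$, which generate $M_n$. (For $n=1$ the statement is trivial.)

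The step I expect to be the main obstacle is Step 1. One needs projections that simultaneously nearly diagonalize $c$ in norm, all have the same trace so that the matrix units exist, and are at least two in number. The construction in Step 2 is arranged so that $c$ only contributes scalar diagonal blocks and never interferes with recovering $v_i$, $a$, $b$. That is precisely why the approximation of $c$ must be by an element of $\operatorname{span}\{e_i\}$, and not just by something commuting with the $e_i$.
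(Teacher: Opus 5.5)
Step~1 asks for something that is false in general, and the problem is not bookkeeping. Take $c=p$ a projection with $\tau(p)\notin\mathbb Q$, and take $\rho\le 1$.

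\begin{itemize}
\item Your $e_i$ lie in the abelian algebra $D\ni p$. Suppose some $e_i$ had $e_ip\ne0\ne e_i(1-p)$. Then $(p-\sum_j\mu_je_j)e_ip=(1-\mu_i)e_ip$ and $(p-\sum_j\mu_je_j)e_i(1-p)=-\mu_ie_i(1-p)$. This gives $\|p-\sum_j\mu_je_j\|\ge\max\{|1-\mu_i|,|\mu_i|\}\ge 1/2\ge\rho/2$.
\item Hence each $e_i$ lies under $p$ or under $1-p$. So $\tau(p)=k/m$, a contradiction.
\item Dropping the requirement $e_i\in D$ does not help. Any $\sum_i\mu_ie_i$ within $1/2$ of $p$ has a spectral projection at distance $<1$ from $p$, and that projection is a sum of some $e_i$. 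It therefore has trace $\tau(p)$, giving the same contradiction.
\end{itemize}

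Your remedy, absorbing a leftover into an adjacent piece, only works when adjacent pieces carry nearby values of $c$. Across a gap in $\sigma(c)$ it costs the full size of the gap in norm. That would be harmless for $\|\cdot\|_2$, but condition (2) is a norm condition. The paper gives no argument of its own here; it simply invokes \cite[Lemma 2.9]{ELN} with Theorem~\ref{thm:SG} as input. So this step has to stand on its own in your proof.

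Your Steps~2 and~3 are sound. In Step~2, the equal traces are used only to manufacture the matrix units $v_i$, and you can do without them:

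\begin{itemize}
\item Let $e_1,\dots,e_m$ be the nonzero spectral projections of $c$ for a partition of $\sigma(c)$ into short intervals, with whatever traces they have.
\item Inside the diffuse algebra $D$, split each $e_i=f_i+g_i$ with $\tau(f_i)=\tau(g_i)$. Choose $v_i$ with $v_i^*v_i=f_i$ and $v_iv_i^*=g_i$.
\item Take generators $a_i,b_i$ of the $\mathrm{II}_1$ factor $f_iMf_i$ from Theorem~\ref{thm:SG}. Take nonzero $w_i\in g_iMf_{i+1}$ for $i<m$; these exist because $M$ is a factor.
\item Give the $2m$ projections $f_i,g_i$ distinct eigenvalues $1+\delta\lambda$ in $H$, and set
\[
K=\sum_i\mu_ie_i+\delta\sum_i\bigl(a_i+v_ib_iv_i^*+v_i+v_i^*\bigr)+\delta\sum_{i<m}(w_i+w_i^*).
\]
\end{itemize}

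Your block-reading argument then gives $a_i,b_i,v_i,w_i\in W^*(H,K)$. Hence $W^*(H,K)$ contains $e_iMe_i$ for every $i$, together with a nonzero element of each $e_iMe_{i+1}$. The conclusion $W^*(H,K)=M$ follows from \cite[Lemma 2.8]{ELN}, which is exactly how the paper concludes in Lemma~\ref{lem:factor-density}. Directly: $e_iMe_i\,w_i\,e_{i+1}Me_{i+1}=e_iMw_iMe_{i+1}$, and the weak closure of $\operatorname{span}Mw_iM$ is all of $M$.

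Finally, using $\alpha+\delta\lambda$ in place of $1+\delta\lambda$ gives the bound $\|H-\alpha1_M\|<\rho$, which is the version the paper actually uses later.
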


We use the functions $\psi_j$ of \cite[Section 5.2]{ELN},
$j=0,\ldots,22$ defined as follows. On $\T=\R/\Z$, first set
$\rho_j(s)=23\dist(s,j/23)$. Then define functions $\psi_j$ by $\psi_j=1$ whenever
$\rho_j\leq13/20$ and $\psi_j=0$ whenever $\rho_j\geq4/5$, and
interpolate linearly. Then define
\begin{equation}\label{eq:cover}
 U_j=\{z\in\T:\psi_j(z)>1/2\},\qquad j=0,\dots, 22.
\end{equation}
Note that these $U_j$ form an open cover of $\T$. Indeed, for any $s\in \T$, there always exists a $j$ such that $\dist(s, j/23)<1/46$, and therefore one has $\rho_j(s)\leq 1/2$, in which case, $\psi_j(s)=1$.

For fixed $t>0$, define
\[
 u(a)=e^{ita},\qquad
 Y(a,\mathbf b)=\sum_{j=0}^{22}\psi_j(u(a))b_j\psi_j(u(a)).
\]

We now prove our first technical lemma, which is based on Lemma \ref{lem:ELN1} and thus is based on Theorem \ref{thm:SG}.
 
\begin{lemma}\label{lem:factor-density}
Let $M$ be a $\mathrm{II}_1$ factor with separable predual. Suppose
$a\in M_{\sa}$ satisfies $\sigma(e^{ita})=\T$, and
$\mathbf b\in M_{\sa}^{23}$. For every $\varepsilon>0$ there are
$a'\in M_{\sa}$ and $\mathbf b'\in M_{\sa}^{23}$ such that
\[
 \norm{a'-a}<\varepsilon,\qquad
 \max_j\norm{b'_j-b_j}<\varepsilon,
 \qquad W^*(u(a'),Y(a',\mathbf b'))=M.
\]
One may take $a'$ to have finite spectrum. 
\end{lemma}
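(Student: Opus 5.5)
Using Lemma \ref{lem:ELN1}, the plan is to perturb $a$ to a self-adjoint element with finite spectrum, and then to choose the $b_j'$ so that the generators of Lemma \ref{lem:ELN1} can be recovered inside $W^*(u(a'),Y(a',\mathbf b'))$.

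First, by Borel functional calculus approximate $a$ in norm within $\varepsilon/2$ by $a_0=\sum_k \mu_k p_k$, a finite sum with pairwise orthogonal projections $p_k$ summing to $1$. Choose the values $\mu_k$ so that the numbers $e^{it\mu_k}$ are pairwise distinct. Then $u(a_0)$ has finite spectrum, and $W^*(u(a_0))$ contains every $p_k$. Since $\sigma(e^{ita})=\T$, we may refine the partition and choose the points $e^{it\mu_k}$ to form a fine mesh of $\T$, with mesh much smaller than $1/23$, and with every $p_k\neq 0$.

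The key observation is that $\psi_j(u(a_0))=\sum_k\psi_j(e^{it\mu_k})p_k$. Using the freedom to move the $\mu_k$ slightly, which stays within the norm budget, we arrange that no $e^{it\mu_k}$ lies where $\psi_j$ takes the value $0$ or $1$ ambiguously, and that for each $k$ there is some $j(k)$ with $\psi_{j(k)}(e^{it\mu_k})=1$. This is possible because the sets $\{\psi_j=1\}$ cover $\T$, as observed after \eqref{eq:cover}. Then
\[
Y(a_0,\mathbf b)=\sum_{k,l}\Big(\sum_j\psi_j(\zeta_k)\psi_j(\zeta_l)\Big)p_kb_jp_l ,\qquad \zeta_k=e^{it\mu_k},
\]
with the inner coefficient placed appropriately inside the $j$-sum. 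Compressing by $p_k$ from both sides gives $p_kY p_k=\sum_j\psi_j(\zeta_k)^2p_kb_jp_k$.

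The main obstacle is recovering the whole algebra rather than only diagonal blocks. For this, fix one index $j_0$. Apply Lemma \ref{lem:ELN1} with $c=b_{j_0}$ to obtain $H=\sum\lambda_ie_i$ close to $1$ and $K$ close to $b_{j_0}$ with $W^*(H,K)=M$. The plan is to realize $H$ through $u(a')$ and $K$ through $Y$. Take $a'=a_0+\delta(H-1)$-type perturbations whose spectral projections refine those of $a_0$ and $H$ jointly. This requires $H$ to commute with the $p_k$, which the lemma does not give. To fix this, I would first apply Lemma \ref{lem:ELN1} inside the corners, or else choose $a_0$ after $H$: take the spectral projections of $a'$ to be the $e_i$, refined if necessary using that $M$ is a II$_1$ factor, with the values chosen so that the $\zeta$'s mesh $\T$. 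Then $W^*(u(a'))\ni e_i$, so $H\in W^*(u(a'))$. Set $b_j'=b_j$ for $j\neq j_0$ and $b_{j_0}'=b_{j_0}+(\text{small})$, chosen so that $Y(a',\mathbf b')$ equals $K$ plus terms lying in $W^*(u(a'))$ and $W^*(Y)$.

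The hardest point is isolating $K$ from the weighted sum over $j$. I would try to make $\psi_{j_0}(u(a'))=1$ on most of the space and to handle the remaining cross terms by letting the small perturbations of the other $b_j$ be $\varepsilon$-small copies of $K$, so that $Y(a',\mathbf b')$ generates $K$ modulo $W^*(u(a'))$. Since each $p_k$ lies in $W^*(u(a'))$, the compressions $p_kYp_l$ can be separated, and the coefficient matrix $\big(\sum_j\psi_j(\zeta_k)\psi_j(\zeta_l)\big)$ acts entrywise. Choosing $b_j'=b_j+\eta K'$ for all $j$, with $K'$ a generic small self-adjoint element, makes every block of $K'$ appear with a nonzero coefficient, since $\sum_j\psi_j(\zeta_k)\psi_j(\zeta_l)$ is nonzero for adjacent points. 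Nonadjacent blocks then require generating $M$ from the adjacent blocks and the $p_k$. This is the step needing the most care. I would resolve it by applying Lemma \ref{lem:ELN1} to obtain generators supported on adjacent pairs of the chain.
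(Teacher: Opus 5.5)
Your proposal has a genuine gap: it never reaches a working construction, and the routes you actually pursue fail.
\begin{itemize}
\item Applying Lemma~\ref{lem:ELN1} once in $M$ produces $H=\sum_i\lambda_ie_i$ whose projections $e_i$ bear no relation to $a$. Your fallback takes $a'$ with spectral projections refining the $e_i$. That forces $[a',e_i]=0$, hence $\norm{[a,e_i]}\le2\norm{a-a'}$. This is incompatible with $\norm{a'-a}<\varepsilon$ unless $a$ almost commutes with every $e_i$, and nothing guarantees that.
\item Arranging $\psi_{j_0}(u(a'))=1$ ``on most of the space'' is also unavailable. $\psi_{j_0}$ is supported on a short arc around $j_0/23$. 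For $a'$ norm-close to $a$, the spectral distribution of $u(a')$ is essentially that of $u(a)$, which may put almost no mass on that arc.
\item Perturbing every $b_j$ by a generic $\eta K'$ can at best make off-diagonal blocks nonzero. It gives no reason why any corner $p_kMp_k$ lies in $W^*(u(a'),Y(a',\mathbf b'))$. The closing appeal to ``generators supported on adjacent pairs'' is not a construction.
\end{itemize}

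The missing idea is the option you mention and then drop: apply Lemma~\ref{lem:ELN1} corner by corner. This is exactly the paper's argument.
\begin{itemize}
\item In each corner $p_kMp_k$, itself a $\mathrm{II}_1$ factor, apply the lemma to $\mu_kp_k$ and $p_kY(a_0,\mathbf b)p_k$. This gives finite-spectrum $H_k$ close to $\mu_kp_k$ and $K_k$ with $W^*(H_k,K_k)=p_kMp_k$.
\item Set $a'=\sum_kH_k$. It commutes with the $p_k$ and is automatically within $\varepsilon$ of $a$. If the sets $e^{it\sigma(H_k)}$ lie in pairwise disjoint small arcs and the exponential is injective on short intervals containing the $\sigma(H_k)$, then $\lambda\mapsto e^{it\lambda}$ is injective on $\sigma(a')$. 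Lagrange interpolation then puts every spectral projection of $a'$, hence every $p_k$ and $H_k$, in $C^*(u(a'))$.
\item To isolate $K_k$ exactly, perturb the single coordinate $b_{j(k)}$ by $Q_k(K_k-p_k\widehat Yp_k)Q_k$. Here $\widehat Y=Y(a',\mathbf b)$, and $Q_k$ is the inverse of $p_k\psi_{j(k)}(u(a'))p_k$ in $p_kMp_k$. This compression is invertible with $\norm{Q_k}\le2$ because $\psi_{j(k)}>1/2$ on the arc around $\zeta_k$. The perturbation yields $p_kY'p_k=K_k$, and since these perturbations are mutually orthogonal, their norms do not accumulate.
\item For the gluing you never need generators of nonadjacent blocks. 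Order the $\zeta_k$ so that consecutive points lie in a common $U_{h(k)}$. Whenever $p_kY'p_{k+1}=0$, add to $b_{h(k)}$ a small $T_k+T_k^*$, where $T_k\in p_kMp_{k+1}$ is obtained from a small nonzero $s_k\in p_kMp_{k+1}$ by multiplying on both sides with the inverses of the compressions of $\psi_{h(k)}(u(a'))$ to the two corners. This makes that block equal to $s_k\neq0$ without changing any diagonal block.
\item Now $W^*(u(a'),Y')$ contains every $p_kMp_k$ and a nonzero element of each $p_kMp_{k+1}$. By \cite[Lemma 2.8]{ELN} it is all of $M$.
\end{itemize}
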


\begin{proof}
Let $a\in M_\sa$ such that $\sigma(e^{ita})=\T$. Using  Borel functional calculus, one may approximate $a$ by  $a^{(0)}=\sum_{i=1}^q\alpha_i p_i$ such that
\[
\|a-a^{(0)}\|<\varepsilon/2.
\]
where the $p_i$ are nonzero orthogonal projections and $\sum_{i=1}^qp_i=1$. In addition, one may assume all these $z_i=e^{it\alpha_i}$ are distinct by small perturbations. Moreover, choose a Lebesgue number for the open cover obtained by all $U_j$ defined in \eqref{eq:cover}. Since $\sigma(e^{ita})=\T$, by ordering all the $z_i$ cyclically, one may further arrange $z_i$ such that every
consecutive pair $z_i,z_{i+1}$ lies in a common $U_j$. Choose $j(i)$ with $z_i\in U_{j(i)}$ and, for $i<q$, choose
$h(i)$ with $z_i,z_{i+1}\in U_{h(i)}$. Then,
there are pairwise disjoint small phase arcs $V_i$ around $z_i$ such
that
\begin{equation}\label{eq:phase-arcs}
 V_i\subset U_{j(i)},\qquad
 V_i\cup V_{i+1}\subset U_{h(i)}\quad(i<q).
\end{equation}
We start from $u^{(0)}=e^{ita^{(0)}}$ and
$Y^{(0)}=\sum_j\psi_j(u^{(0)})b_j\psi_j(u^{(0)})$. Inductively we will define new $\mathbf{b}'$ and $Y'$.

For each $i$, It follows from Lemma \ref{lem:ELN1} that in $p_iMp_i$, for $\alpha_i p_i$ and $p_iY^{(0)}p_i$, there exits a finite-spectrum $H_i=\sum_{r=1}^{m_i}\lambda_{ir}e_{ir}$ with $e_{ir}$ pairwise orthogonal and $\sum_{r=1}^{m_i}e_{ir}=p_i$,
and $K_i=K_i^*$ such that
\begin{enumerate}
    \item\label{cond} $\|H_i-\alpha_ip_i\|<\varepsilon/2$, and $e^{it\sigma(H_i)}\subset V_i$, and the exponential is injective on the short real interval $I_i$ containing
$\sigma(H_i)$;
    \item $\|K_i-p_iY^{(0)}p_i\|<\varepsilon/32$;
    \item $W^*(H_i,K_i)=p_iMp_i$.
\end{enumerate}
Set
\[
 a'=\sum_{i=1}^qH_i=\sum_{i=1}^q\sum_{r=1}^{m_i}\lambda_{ir}e_{ir},\quad u'=e^{ita'}=\sum_{i}\sum_re^{it\lambda_{ir}}e_{ir},\quad
 \widehat Y=\sum_j\psi_j(u')b_j\psi_j(u').
\]
Because all $p_i$ are distinct, one actually has $e_{ir}$ are pairwise orthogonal to each other. Then define $\xi_{ir}=e^{it\lambda_{ir}}$. Recall that $e^{it\sigma(H_i)}\subset V_i$, which are disjoint from each other.  Moreover, all $\lambda_{ir}\in I_i$ and the function $x\mapsto e^{itx}$ is injective on $I_i$. These imply that all the $\xi_{ir}$ are different. For each $1\leq i\leq q$ and $1\leq r\leq m_i$, we define a function
\[L_{ir}(z)=\prod_{(j,s)\neq(i,r)}\frac{z-\xi_{js}}{\xi_{ir}-\xi_{js}}.\]
This yields $L_{ir}(\xi_{js})=1$ if $(j,s)=(i,r)$ and $L_{ir}(\xi_{js})=0$ if $(j,s)\neq(i,r)$. Therefore, one has
\[L_{ir}(u')=\sum_{j,s}L_{ir}(\xi_{js})e_{js}=e_{ir},\]
which implies that $p_i, H_i\in C^*(u')$.
Moreover, note that by construction 
\[\|a'-a^{(0)}\|=\max_i\|H_i-\alpha_ip_i\|<\varepsilon/2.\]
Moreover, if necessary, we may make $H_i$ more closer to $\alpha_ip_i$ such that
$\|\widehat Y-Y^{(0)}\|<\varepsilon/32$, which implies that $\|K_i-p_i\widehat Yp_i\|\leq \epsilon/32$.

We now work in the $p_i$ corner $p_iMp_i$. Note that $\psi_{j(i)}(u')=\sum\psi_{j(i)}(\xi_{js})e_{js}$ and therefore $p_i\psi_{j(i)}(u')p_i=\sum_{r=1}^{m_i}\psi_{j(i)}(\xi_{ir})e_{ir}$, which is invertible in $p_iMp_i$ with the inverse $Q_i=\sum\psi^{-1}_{j(i)}(\xi_{ir})e_{ir}$ such that $\|Q_i\|\leq 2$ as each $\psi_{j(i)}(\xi_{ir})>1/2$ by \eqref{eq:phase-arcs}. Define pairwise orthogonal elements
\[
 D_i=Q_i(K_i-p_i\widehat Yp_i)Q_i\in (p_iMp_i)_\sa.
\]
Add $D_i$ to coordinate
$b_{j(i)}$ to define
\[
 Y^{(1)}=\widehat Y+\sum_i(K_i-p_i\widehat Yp_i), 
\]
which satisfies $p_iY^{(1)}p_i=K_i$.

We now deal with $p_iY^{(1)}p_{i+1}$ for each $i\leq q-1$. If $p_iY^{(1)}p_{i+1}\ne0$, we do nothing.
If $p_iY^{(1)}p_{i+1}=0$, choose a nonzero
$s_i\in p_iMp_{i+1}\neq 0$ with $\|s_i\|\leq \varepsilon/32q$. For $h(i)$, in the similar way, $\psi_{h(i)}(\xi_{ir})>1/2$ and $\psi_{h_i}(\xi_{(i+1)r})>1/2$ by \eqref{eq:phase-arcs} implies that
 $A_i=p_i\psi_h(u')p_i$ and $B_i=p_{i+1}\psi_h(u')p_{i+1}$ are invertible in $p_iMp_i$ and $p_{i+1}Mp_{i+1}$, respectively, with norm $\|A_i\|, \|B_i\|\leq 2$. Define $T_i=A^{-1}_is_iB^{-1}_i$ and we add to coordinate
$b_{h(i)}$ by $T_i+T_i^*$.  This implies that
\[\psi_{h(i)}(u')(T_i+T_i^*)\psi_{h(i)}(u')=s_i+s_i^*.\]
Denote the final tuple by $\mathbf b'$ and its
localized sum by $Y'=Y'(a',\mathbf{b}')=\sum_j\psi_j(u')b'_j\psi_j(u')$. Note by our construction, for each $j$, the difference between $b_j$ and $b'_j$ are some $D_i$ and some $T_i+T^*_i$. Since all $D_i$ are orthogonal, one actually has
\[\|\sum_i D_i\|=\max_i\|D_i\|\leq 4\max_i\|K_i-p_i\widehat Yp_i\|\leq \varepsilon/8.\]
Moreover, each $\|T_i+T^*_i\|\leq 8\|s_i\|$ and thus 
\[\sum_{1\leq i\leq q,\ p_iY^{(1)}p_{i+1}=0}\|T_i+T^*_i\|\leq \varepsilon/4.\]
This implies that $\max_{j}\|b'_j-b_j\|<\varepsilon/2$. In addition, since  $\norm{a^{(0)}-a}<\varepsilon/2$ and
$\norm{a'-a^{(0)}}<\varepsilon/2$, one has
$\norm{a'-a}<\varepsilon$. 

Finally note that $p_iY'p_i=K_i$ and $p_iY'p_{i+1}\neq 0$.
The algebra $W^*(u',Y')$ contains $p_i,H_i,K_i$. By our choice, $p_iMp_i=W^*(H_i, K_i)$, which implies that $W^*(u', Y')$ contains all full
corners $p_iMp_i$. Moreover,  it contains a nonzero element of every
$p_iMp_{i+1}$. Then it follows from \cite[Lemma 2.8]{ELN} that $W^*(u',Y')=M$.
\end{proof}

The distinction between extreme and arbitrary traces is essential. Next lemma deal with the approximation with respect to a general trace.

\begin{lemma}\label{lem:all-traces}
Let $A$ be separable, simple, unital, and infinite
dimensional with $X=T(A)\ne\varnothing$.
Suppose $a=a^*\in A$ has $\sigma(e^{ita})=\T$ and
$\mathbf b\in A_{\sa}^{23}$. For any $\rho\in X$, self-adjoint
contraction $z\in A$, and $r,\varepsilon>0$, there are parameters
$(a_\rho,\mathbf b_\rho)$ within $r$ in each coordinate of
$(a,\mathbf b)$ and a self-adjoint $*$-polynomial $P_\rho$ with
\[
 \norm{z-P_\rho(u(a_\rho),Y(a_\rho,\mathbf b_\rho))}_{2,\rho}
 <\varepsilon.
\]
\end{lemma}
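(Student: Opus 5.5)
The plan is to reduce from an arbitrary trace to a finite convex combination of extreme traces, and then to a finite direct sum of $\mathrm{II}_1$ factors where Lemma \ref{lem:factor-density} applies.

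\textbf{Reduction.} Since $T(A)$ is a Choquet simplex, finite convex combinations of extreme traces are weak$^*$ dense in $X$. For any \emph{fixed} element $x\in A$, the map $\sigma\mapsto\norm{x}_{2,\sigma}=\sigma(x^*x)^{1/2}$ is weak$^*$ continuous. So it suffices to do the following: find $\rho'=\sum_{k=1}^n\lambda_k\tau_k$ with $\tau_k\in\partial_eX$ distinct and $\lambda_k>0$, produce parameters $(a_\rho,\mathbf b_\rho)$ and a polynomial $P_\rho$ with
\[
\norm{z-P_\rho(u(a_\rho),Y(a_\rho,\mathbf b_\rho))}_{2,\rho'}<\varepsilon/2,
\]
and then choose $\rho'$ close enough to $\rho$ for the single element $z-P_\rho(\cdots)\in A$. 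There is a circularity: the element depends on $\rho'$. To break it, I would choose $\rho'$ first and fix the output. Then, if needed, I would shrink the weak$^*$ neighbourhood by running the argument for a net $\rho'_\lambda\to\rho$ and estimating at $\rho$ directly. The cleaner route is to note that the construction below works verbatim for any finite combination. Taking $\rho'$ close to $\rho$ only in the finitely many test functionals that appear suffices, since the bound needed is on $\sigma(x^*x)$ for one $x$. I expect this bookkeeping, getting the order of quantifiers right, to be the main delicate point.

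\textbf{The finite case.} Let $\pi=\bigoplus_k\pi_{\tau_k}$. Distinct extreme traces give disjoint factor representations, so $M:=\pi_{\rho'}(A)''\cong\bigoplus_k M_k$ with $M_k=\pi_{\tau_k}(A)''$. Each $M_k$ is a finite factor with separable predual. It is infinite dimensional, because $\pi_{\tau_k}$ is faithful ($A$ is simple) and $A$ is infinite dimensional, so $M_k$ is a $\mathrm{II}_1$ factor. Faithfulness also gives $\sigma(e^{it\pi_{\tau_k}(a)})=\sigma(e^{ita})=\T$. Apply Lemma \ref{lem:factor-density} in each $M_k$ to get finite-spectrum $a'_k$ and $\mathbf b'_k$, each within $r/2$ of $\pi_{\tau_k}(a)$ and $\pi_{\tau_k}(\mathbf b)$, with $W^*(u(a'_k),Y(a'_k,\mathbf b'_k))=M_k$.

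By a further tiny perturbation, I would make the finite sets $e^{it\sigma(a'_k)}$ pairwise disjoint for different $k$. Then the central projections $1_{M_k}$ are polynomials in $u(a')$, where $a'=\oplus_k a'_k$. Set $\mathbf b'=\oplus_k\mathbf b'_k$. Then $W^*(u(a'),Y(a',\mathbf b'))$ contains each $1_{M_k}$ and hence each $M_k$, so it equals $M$.

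\textbf{Lifting back to $A$.} The differences $a'-\pi(a)$ and $b'_j-\pi(b_j)$ lie in $M$ with norm $<r/2$. By Kaplansky density, pick self-adjoint $\delta,\delta_j\in A$ with $\norm{\delta},\norm{\delta_j}<r$ such that $\pi(\delta)$ and $\pi(\delta_j)$ approximate these differences in $\norm{\cdot}_{2,\rho'}$. Set $a_\rho=a+\delta$ and $(\mathbf b_\rho)_j=b_j+\delta_j$; these are within $r$ of $(a,\mathbf b)$.

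Since $z\in A\subset M=W^*(u(a'),Y')$, Kaplansky density again gives a self-adjoint $*$-polynomial $P_\rho$ with $\norm{z-P_\rho(u(a'),Y')}_{2,\rho'}<\varepsilon/4$. Finally, on norm-bounded sets the maps $x\mapsto e^{itx}$ and $x\mapsto\psi_j(e^{itx})$, as well as products and sums, are continuous for the strong topology, that is, for $\norm{\cdot}_{2,\rho'}$. Hence $P_\rho(u(a_\rho),Y(a_\rho,\mathbf b_\rho))$ is within $\varepsilon/4$ of $P_\rho(u(a'),Y')$ once the Kaplansky approximations are fine enough, with $P_\rho$ fixed first. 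This gives the $\varepsilon/2$ estimate at $\rho'$, and the reduction step transfers it to $\rho$.
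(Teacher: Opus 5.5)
\textbf{There is a genuine gap in the reduction step.} Your finite-combination step is essentially sound; it is the same gluing the paper performs with central projections and the Laurent polynomials $q_l$. The problem is the passage from $\rho$ to $\rho'=\sum_k\lambda_k\tau_k$, and neither of your proposed repairs closes it.

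The element $x=z-P_\rho(u(a_\rho),Y(a_\rho,\mathbf b_\rho))$ is produced by Kaplansky density inside $\pi_{\rho'}(A)''$. So $x$ exists only after $\rho'$ has been fixed. But the weak$^*$ neighbourhood of $\rho$ in which $\rho'$ must lie is determined by the functional $\sigma\mapsto\sigma(x^*x)$, which you do not yet know.

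\begin{itemize}
\item Weak$^*$ convergence is not uniform on bounded sets. So truncating $P_\rho$ does not help, and neither does running over a net $\rho'_\lambda\to\rho$ with varying outputs $x_\lambda$.
\item Norm approximation is also unavailable. If $\mu_\rho$ is diffuse, $\rho$ is disjoint from every $\tau_k$, and $\norm{\rho-\rho'}=2$.
\item Nothing in the construction at $\rho'$ controls $x$ away from $\tau_1,\dots,\tau_n$. Think of the commutative picture: a function vanishing at finitely many atoms can still have $L^2$-norm near $1$ for Lebesgue measure.
\end{itemize}

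\textbf{The missing idea is to reverse the order of quantifiers, which is what the paper does.}
\begin{enumerate}
\item For each $\tau\in\partial_eX$, build $C_\tau\in A$ with $\norm{z-C_\tau}_{2,\tau}$ small, using Lemma \ref{lem:factor-density} and Kaplansky density.
\item Since $C_\tau$ is now a fixed element, this estimate persists on an open neighbourhood $O_\tau$ of $\tau$.
\item Only now bring in $\rho$. Its boundary measure $\mu_\rho$ is almost carried by finitely many $O_l$. Disjoint Borel pieces $V_l\subset O_l\cap\partial_eX$ give central projections $p_l=\theta_\rho(1_{V_l})\in Z(\pi_\rho(A)'')$.
\item Glue $H=\sum_lp_lH_l$, choosing the sets $\sigma(e^{itH_l})$ pairwise disjoint so that $p_l=q_l(e^{itH})$.
\item Estimate in $\norm{\cdot}_{2,\hat\rho}$ at $\rho$ itself, piece by piece, using $\hat\rho(\theta_\rho(f)\pi_\rho(x))=\int_X f(\tau)\tau(x)\,d\mu_\rho(\tau)$.
\item Lift back to $A$ by Kaplansky density.
\end{enumerate}
No weak$^*$ transfer is ever needed.

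\textbf{A smaller point in your finite case.} A tiny perturbation of $a'_k$ to separate the sets $e^{it\sigma(a'_k)}$ also changes $\psi_j(u(a'_k))$, and hence changes $Y$. So the exact generation $W^*(u(a'_k),Y(a'_k,\mathbf b'_k))=M_k$ is not automatically preserved. As in the paper, fix the approximating polynomials first and perturb in norm afterwards. Then you only need approximate generation, not exact generation.
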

\begin{proof}
For each $\tau\in\partial_eX$, denoting by $\pi=\pi_\tau$ for simplicity, it is standard that $M_\tau=\pi(A)''$ is a
$\mathrm{II}_1$ factor with separable predual. Recall that $\tau$ induces a canonical faithful normal trace $\hat{\tau}$ on $M$. For simplicity, we denote by $\|x\|_2=\hat{\tau}(x^*x)^{1/2}$ for $x\in M$. In this way, note that $\|\pi(x)\|_2=\|x\|_{2,\tau}$ for $x\in A$.

The simplicity of $A$ makes
$\pi_\tau$ faithful, so $\sigma(e^{it\pi(a)})=\T$. Now for the $z\in A$, it follows from Lemma \ref{lem:factor-density} for $\pi(a), \pi(b_0),\dots,\pi(b_{22})$ and $r>0$ that there exists a $\tilde{a}$ with finite spectrum, and $\tilde{b_1},\dots, \tilde{b_{22}}$ such that
$\|\tilde{a}-\pi(a)\|<r/16$, $\max_j\|\tilde{b_j}-\pi(b_j)\|<r/16$, and by setting $\tilde{u}=e^{it\tilde{a}}$ and $\tilde{y}=\sum_{j=0}^{22}\psi_j(\tilde{u})\tilde{b_j}\psi_j(\tilde{u})$, one has $W^*(\tilde{u}, \tilde{y})=M$.
Thus, $C^*(1, \tilde{u}, \tilde{y})$ is strongly dense in $M$, one may assume there exists a $*$-polynomial $P$ such that $\|\pi(z)-P(\tilde{u}, \tilde{y})\|_2<\varepsilon/32$. Since $c=c^*$, one may assume $P=P^*$.

Write $d=\tilde{a}-\pi(a)$, and $e_j=\tilde{b_j}-\pi(b_j)$ for simplicity. Then $\|d\|, \|e_j\|<r/16$ for any $j$. Then for any $\eta>0$, Kaplansky density theorem implies that there exists self-adjoint elements $d_\eta, e_{j, \eta}\in A$ such that $\|d_\eta\|\leq r/16$, $\|e_{j, \eta}\|\leq r/16$, as well as $\|\pi(d_\eta)-d\|_2<\eta$ and $\|\pi(e_{j, \eta})-e_j\|_2<\eta$ for any $j$. Now define $a_\eta=a+d_\eta$ and $b_{j, \eta}=b_j+e_{j,\eta}$. Then one has 
\[\|a_\eta-a\|<\eta/16, \qquad \|b_{j,\eta}-b_j\|<\eta/16\]
and
\[\|\pi(a_\eta)-\tilde{a}\|_2<\eta, \qquad \|\pi(b_{j,\eta})-\tilde{b_j}\|_2<\eta.\]
Define $u_\eta=e^{it\pi(a_\eta)}$. By continuity,  one has
\[\|u_\eta-\tilde{u}\|<t\eta\]
and there exists a constant $L_0$, depending on $\psi_j$ for $j=0,\dots, 22$, such that
\[\|\psi_j(u_\eta)-\psi_j(\tilde{u})\|<L_0t\eta.\]
Define $y_\eta=\sum_{j=0}^{22}\psi_j(u_\eta)\pi(b_{j, \eta})\psi_j(u_\eta)$ and $K=\max_{0\leq j\leq 22}\|b_j\|+r/16$. This implies that 
\[\|y_\eta-\tilde{y}\|_2<(46KL_0t+23)\eta\]
and therefore there exists constant $\Lambda_P, C_0>0$ such that 
\[\|P(u_\eta, y_\eta)-P(\tilde{u}, \tilde{y})\|_2\leq \Lambda_PC_0\eta.\]
Therefore, taking $0<\eta<\varepsilon/32C_0(1+\Lambda_P)$ one has
\[\|z-P(u(a_\eta), Y(a_\eta, \mathbf b_\eta))\|_{2,\tau}=\|\pi(z)-P(u_\eta, y_\eta)\|_2<\varepsilon/16.\]
Write $C_\tau=P(u(a_\eta), Y(a_\eta, \mathbf b_\eta)\in A$ for simplicity and define an open set $O_\tau=\{\sigma\in T(A):\sigma((z-C_\tau)^*(z-C_\tau))<\varepsilon^2/64\}$ such that $\|z-C_\tau\|_{2, \sigma}<\varepsilon/8$. Since $X=T(A)$ is second countable, one may choose countably many such $O_l$, which forms an open cover of $\partial_e X$ such that
\begin{equation}\label{eq:local-extreme}
 C_l=P_l(u(a_l),Y(a_l,\mathbf b_l)),\qquad
 \norm{z-C_l}_{2,\tau}<\varepsilon/8\quad(\tau\in O_l).
\end{equation}
in which $\|a-a_l\|<r/8$ and $\max_j\|b_{l, j}-b_j\|<r/8$.

Now let $\rho\in X$ be a general element and let $\mu_\rho$ be the representing measure of $\rho$ on $X$ with the support $\partial_eX$.
Choose finitely many of the above neighborhoods so that
\[
 \mu_\rho\Bigl(\bigcup_{l=1}^LO_l\Bigr)>1-(\varepsilon/4)^2.
\]
Then define $V_1=O_1\cap \partial_eX$, and $V_l=(O_l\cap \partial_eX)\setminus \bigcup_{k<l}V_k$.
These are disjoint measurable sets $V_l\subset O_l\cap\partial_eX$ and satisfies
$\bigsqcup_{l=1}^LV_l=\partial_eX\cap (\bigcup_{l=1}^LO_l)$. Define $V_0=X\setminus \bigsqcup_{l=1}^LV_l$.

By \cite[Lemma 3.38]{TC}, there is a normal isomorphism
\[
 \theta_\rho:L^\infty(X,\mu_\rho)\longrightarrow
 Z(M_\rho),\qquad M_\rho=\pi_\rho(A)'',
\]
with
\begin{equation}\label{eq:central-measure}
 \widehat\rho(\theta_\rho(f)\pi_\rho(x))
 =\int_Xf(\tau)\tau(x)\,d\mu_\rho(\tau).
\end{equation}
in which $\hat{\rho}$ is the canonical faithful normal trace.
Set $p_l=\theta_\rho(1_{V_l})$ for $l=0,1,\dots, L$ which are central
orthogonal projections such that $\sum_{l=0}^Lp_l=1$. Moreover,  \eqref{eq:central-measure} shows that
\begin{align*}
    \|\hat{\rho}(p_l\pi_\rho(z-C_l))\|^2_{2,\hat\rho}&=\hat{\rho}((p_l\pi_\rho(z-C_l))^* p_l\pi_\rho(z-C_l))=\hat{\rho}(p_l\pi_\rho(z-C_l)^*\pi_\rho(z-C_l))\\
    &=\int_{V_i}\tau(\pi_\rho(z-C_l)^*\pi_\rho(z-C_l))d\mu_\rho(\tau)=\int_{V_i}\|z-C_l\|^2_{2,\tau}d\mu_\rho(\tau).
\end{align*}
Combining with \eqref{eq:local-extreme},  this further implies that
\begin{equation}\label{eq:approx1}
    \|p_l\pi_\rho(z-C_l)\|_{2,\hat\rho}\leq (\varepsilon/8)\mu_\rho(V_l)^{1/2}.
\end{equation}
On the other hand, since $z$ is a contraction, one also has
\[\|p_0\pi_\rho(z)\|^2_{2, \hat\rho}=\hat\rho(p_0\pi_\rho(z^*z))\leq \hat\rho(p_0)=\mu_\rho(V_0)\leq (\varepsilon/4)^2\]
which implies
\begin{equation}\label{eq:approx2}
  \|p_0\pi_\rho(z)\|_{2, \hat\rho}\leq \varepsilon/4.
\end{equation}

For $l\geq 0$, approximate $\pi_\rho(a_l)$ in norm by a finite-spectrum
$H_l\in(M_\rho)_{\sa}$ such that
\[\|H_l-\pi_\rho(a_l)\|<\delta_l\]
is small enough such that
\begin{equation}\label{eq:appro1}
    \norm{H_l-\pi_\rho(a)}<r/4
\end{equation}
and for $l\geq 1$,
\begin{equation}\label{eq:appro2}
  \|P_l(W_l, Z_l)-\pi_\rho(C_l)\|\leq \varepsilon/8  
\end{equation}
for $C_l$ in \eqref{eq:local-extreme}, where 
$W_l=e^{itH_l}$, and $Z_l=\sum_j\psi_j(W_l)\pi_\rho(b_{l,j})\psi_j(W_l).$
Furthermore, by small perturbation, one may also arrange that $E_l=\sigma(e^{itH_l})$ for $l\geq 0$ are pairwise disjoint because $H_l$ is of finite spectrum. Define
\[
 H=\sum_{l=0}^Lp_lH_l,\qquad
 B_j=\sum_{l=0}^Lp_l\pi_\rho(b_{l,j}),\qquad
 W=e^{itH},\quad Z=\sum_j\psi_j(W)B_j\psi_j(W).
\]
Since $H-\pi_\rho(a)=\sum_{l}p_l(H_l-\pi_\rho(a))$, one actually has 
\[\|H-\pi_\rho(a)\|\leq \max_l\|H_l-\pi_\rho(a)\|\leq r/4.\]
Similarly, one has $\|B_j-\pi_\rho(b_j)\|\leq r/8$. Using the orthogonality of these $p_l$ again, a direct calculation shows $W=\sum_{l=0}^Lp_lW_l$ and $Z=\sum_{l=0}^Lp_lZ_l$. Furthermore, one has $p_lP(W, Z)=p_l(W_l, Z_l)$.

Define $E=\bigsqcup_{l=0}^L E_l$. For each $\xi\in E$, define function $L_\xi$ by
\[L_\xi(w)=\prod_{\eta\in E,\eta\ne \xi}\frac{w-\eta}{\xi-\eta},\]
which satisfies $L_\xi(\eta)=1$ if $\eta=\xi$ and $L_\xi(\eta)=0$ if $\eta\neq \xi$. Moreover, for each $l$, choose the Laurent polynomial $q_l(w)=\sum_{\xi\in E_l}|L_\xi(w)|^2$ on the circle, which satisfies $q_l|_{E_l}=1$ and $q_l|_{E_k}=0$ for $k\ne l$. This implies that $q_l(W_k)=1$ only if $k=l$. Otherwise $q_l(W_k)=0$. Therefore, one has
\begin{equation}\label{eq:appro3}
    q_l(W)=\sum_{k=0}^Lp_kq_l(W_k)=p_l,
\end{equation}
which implies that $C^*(W)$ contains all these projections $p_l$ for $l=0,\dots, L$. We define polynomial $P_0=0$ and define
a self-adjoint polynomial
\[
 Q(v,w)=\frac12\sum_{l=0}^L
 \bigl(q_l(v)P_l(v,w)+P_l(v,w)q_l(v)\bigr)
\]
satisfies $Q(W,Z)=\sum_lp_lP_l(W_l,Z_l)$ by a direct calculation. Since $P_0=0$, one has $p_0Q(W, Z)=0$.   For $l\geq 1$, one has
\[p_l(\pi_\rho(z)-Q(W,Z))=p_l(\pi_\rho(z)-P_l(W_l, Z_l))=p_l\pi_\rho(z-C_l)+p_l(\pi_\rho(C_l)-P_l(W_l, Z_l)).\]
Then, using \eqref{eq:approx1} and \eqref{eq:appro2}, one has 
\begin{equation}\label{eq:appro5}
    \|p_l(\pi_\rho(z)-Q(W,Z))\|_2\leq (\varepsilon/8)\mu_\rho(V_l)^{1/2}
\end{equation}
for $l\geq 1$. Then for $l=0$, note that
\begin{equation}\label{eq:appro6}
   \|p_0(\pi_\rho(z)-Q(W, Z))\|_2^2=\|p_0\pi_\rho(z)\|_2^2=\hat\rho(p_0\pi_\rho(z^*z))\leq \hat\rho(p_0)=\mu_\rho(V_0)<\varepsilon^2/16. 
\end{equation}
Writing $D=\pi_\rho(z)-Q(W, Z)$ for simplicity, because of $\sum_l p_l=1$, one has
\[\|D\|_2^2=\hat\rho(D^*(\sum_{l=0}^Lp_l)D)=\sum_l\|p_lD\|_2^2.\]
Then it follows from \eqref{eq:appro5} and \eqref{eq:appro6} that
\[\|D\|^2_2\leq (\varepsilon^2/16)\cdot\sum_{l=1}^L\mu_\rho(V_l)+\mu_\rho(V_0)\leq \varepsilon^2/4.\]
Consequently, one has
\begin{equation}
     \norm{\pi_\rho(z)-Q(W,Z)}_{2}\leq \varepsilon/2.
\end{equation}
This is the desired approximation inequality with elements in $M_\rho$. Then using Kaplansky density and continuity. one may approximate $Q(W, Z)$ by elements in $A$ under $2$-norm such that there exists parameters
$(a_\rho,\mathbf b_\rho)$ within $r$ in each coordinate of
$(a,\mathbf b)$ and a self-adjoint $*$-polynomial $P_\rho=Q$ with
\[
 \norm{z-P_\rho(u(a_\rho),Y(a_\rho,\mathbf b_\rho))}_{2,\rho}
 <\varepsilon.
\]
\end{proof}

Now, we will use CPoU to glue the single trace case to uniform case. 
Write $X=T(A)$ and
\[
 \norm{x}_{2,X}=\sup_{\rho\in X}\rho(x^*x)^{1/2}.
\]
Let $\mathcal M$ denote the uniform tracial completion of $A$ relative to $X$. 
For a free ultrafilter $\omega$, identify
\[
 N=\mathcal M^\omega\cong A^\omega
 =\ell^\infty(A)/\{(x_n):\lim_\omega\norm{x_n}_{2,X}=0\}.
\]
Bounded uniform tracial approximation gives this identification.
Write $X^\omega$ for the designated compact set of traces on $N$,
generated by the limit traces. Its uniform $2$-norm satisfies
\[
 \norm{[(x_n)]}_{2,X^\omega}=\lim_\omega\norm{x_n}_{2,X}.
\]
Every $\sigma\in X^\omega$ restricts to a member of $X$ on the
constant copy of $A$.

We need two consequences of CPoU, both from \cite{TC}:
\begin{enumerate}
\item $N$ has real rank zero (\cite[Proposition 7.2]{TC}).
\item Given positive $e_1,\ldots,e_L\in N$, a uniformly $2$-norm
separable $S\subset N$, and
\[
 \delta>\sup_{\sigma\in X^\omega}\min_l\sigma(e_l),
\]
there are orthogonal projections $p_l\in N\cap S'$ with
\begin{equation}\label{eq:cpou}
 \sum_lp_l=1,\qquad \sigma(e_lp_l)\leq\delta\sigma(p_l)
 \quad(\sigma\in X^\omega).
\end{equation}
This is \cite[Proposition 6.2(iii)]{TC}, also expressed by \cite[Corollary 7.8]{TC}.
\end{enumerate}

Then we have the following uniform approximation.

\begin{proposition}\label{prop:uniform-patch}
Under the assumptions of Lemma~\ref{lem:all-traces}, suppose also that
$\mathcal M$ has CPoU. For every self-adjoint contraction $z\in A$
and $r,\varepsilon>0$, there exist parameters $(a_1,\mathbf b_1)$
within $r$ of $(a,\mathbf b)$ in each coordinate and a self-adjoint
$*$-polynomial $Q$ such that
\[
 \norm{z-Q(u(a_1),Y(a_1,\mathbf b_1))}_{2,T(A)}<\varepsilon.
\]
\end{proposition}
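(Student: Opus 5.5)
The plan is the standard CPoU gluing argument. The local input is Lemma~\ref{lem:all-traces}, glued together in $N=A^\omega$ using \eqref{eq:cpou}, and then pulled back to $A$ via a representing sequence.

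\textbf{Step 1: finitely many local solutions.} Fix $z$, $r$, $\varepsilon$. For each $\rho\in X$, Lemma~\ref{lem:all-traces} (applied with $r/2$ and $\varepsilon/4$) gives parameters $(a_\rho,\mathbf b_\rho)$ and a self-adjoint polynomial $P_\rho$. Set $C_\rho=P_\rho(u(a_\rho),Y(a_\rho,\mathbf b_\rho))\in A$. Then $\rho((z-C_\rho)^2)<(\varepsilon/4)^2$, and this is an open condition in $\rho$. Compactness of $X$ then gives finitely many $\rho_1,\dots,\rho_L$ such that, with $e_l=(z-C_l)^2\in A_+$, we have
\[
\sup_{\sigma\in X}\min_l\sigma(e_l)<(\varepsilon/4)^2=:\delta .
\]
Viewing the $e_l$ as constant sequences in $N$, and using that every $\sigma\in X^\omega$ restricts to an element of $X$ on the constant copy of $A$, the hypothesis of \eqref{eq:cpou} holds. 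Take $S$ to be the separable $C^*$-algebra generated by $A$, so that it contains $z$ and all the $a_l$, $b_{l,j}$. CPoU then gives orthogonal projections $p_l\in N\cap S'$ with $\sum_lp_l=1$ and $\sigma(e_lp_l)\le\delta\sigma(p_l)$.

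\textbf{Step 2: glue inside $N$, mimicking the second half of Lemma~\ref{lem:all-traces}.} There the central projections $p_l$ came from $Z(M_\rho)$; here they come from $N\cap S'$.

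First use real rank zero of $N$ (\cite[Proposition 7.2]{TC}) to replace $p_la_lp_l$ by a finite-spectrum self-adjoint $H_l$ in $p_lNp_l$. Choose $H_l$ norm-close to $p_la_l$, with spectra $E_l=\sigma(e^{itH_l})$ pairwise disjoint after a small perturbation. Since $p_l$ commutes with $a_l$ and $b_{l,j}$, this is possible.

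Now set
\[
H=\sum_lH_l,\qquad B_j=\sum_lp_lb_{l,j},\qquad W=e^{itH},\qquad Z=\sum_j\psi_j(W)B_j\psi_j(W).
\]
Then $\|H-a\|<r$ and $\|B_j-b_j\|<r$. Moreover $W=\sum_lp_lW_l$ and $Z=\sum_lp_lZ_l$, with $Z_l$ close in norm to $p_lY(a_l,\mathbf b_l)$, so that $p_lP_l(W,Z)\approx p_lC_l$.

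The Lagrange-interpolation Laurent polynomials $q_l$ satisfy $q_l(W)=p_l$. The self-adjoint polynomial
\[
Q(v,w)=\tfrac12\sum_l\bigl(q_l(v)P_l(v,w)+P_l(v,w)q_l(v)\bigr)
\]
therefore satisfies $Q(W,Z)=\sum_lp_lP_l(W_l,Z_l)$.

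For $\sigma\in X^\omega$, the commutation $p_l\in S'$ gives
\[
\sigma\bigl((z-Q(W,Z))^2\bigr)\approx\sum_l\sigma(p_le_l)\le\delta .
\]
Hence $\|z-Q(W,Z)\|_{2,X^\omega}<\varepsilon/2$.

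\textbf{Step 3: descend to $A$.} Lift $H$ and $B_j$ to representing sequences $(h_n)$ and $(b_{j,n})$ in $A_{\sa}$. Choose the lifts with $\|h_n-a\|<r$ and $\|b_{j,n}-b_j\|<r$; this is possible since norm balls lift in the ultrapower via self-adjoint bounded lifts. By continuity of functional calculus in $\|\cdot\|_{2,X}$ for bounded sequences, $e^{ith_n}$ represents $W$, and similarly $Y(h_n,\mathbf b_n)$ represents $Z$. Thus $\lim_\omega\|z-Q(u(h_n),Y(h_n,\mathbf b_n))\|_{2,X}<\varepsilon/2$. Some $n$ then gives the required $(a_1,\mathbf b_1)=(h_n,\mathbf b_n)$.

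\textbf{Expected obstacle.} The delicate step is Step 2. One must ensure that the finite-spectrum perturbations keep $Q(W,Z)$ close to $\sum p_lC_l$ in norm, and not just in $2$-norm. This is handled as in \eqref{eq:appro2}, because $P_l$ is a fixed polynomial and $\psi_j$ is Lipschitz. One must also justify that $\psi_j$ of a bounded sequence represents $\psi_j$ of its class. For this I would approximate $\psi_j$ uniformly by trigonometric polynomials in $u$.
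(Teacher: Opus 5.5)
Your proposal is correct and follows essentially the same route as the paper: local solutions from Lemma~\ref{lem:all-traces} plus compactness, CPoU in $N=A^\omega$, finite-spectrum approximants via real rank zero, the interpolation polynomials $q_l$ with $q_l(W)=p_l$ and the symmetrized $Q$, and then lifting back to $A$. The only difference is the order of two steps. The paper first chooses finite-spectrum $H_l\in N$ and then applies CPoU with $S=C^*(A,H_1,\dots,H_L)$ and $e_l=|z-F_l|^2\in N$; you apply CPoU with $S=A$ and constant $e_l=(z-C_l)^2$, and afterwards approximate $p_la_l$ inside the corners $p_lNp_l$, which inherit real rank zero. This reordering is harmless and lets you invoke CPoU only in its basic form.
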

\begin{proof}
Still denote by $X$ the whole space $T(A)$ for simplicity. Apply Lemma~\ref{lem:all-traces} at every $\rho\in X$, with parameter
radius $r/8$ and error $\varepsilon/16$ such that there exists $a_\rho, \mathbf b_\rho$ such that
\[\|a-a_\rho\|<r/8, \qquad \sup_j\|b_{\rho, j}-b_j\|<r/8,\]
and a self-adjoint polynomial $P_\rho$ such that $\|z-P_\rho(u(a_\rho), Y(a_\rho,\mathbf b_\rho))\|_{2, \rho}<\varepsilon/16.$ Denote by $C_l=P_\rho(u(a_\rho), Y(a_\rho,\mathbf b_\rho))\in A$ for simplicity and define
\[O_\rho=\{\tau\in X: \|z-C_l\|_{2,\tau}<\varepsilon/8\},\]
which is an open neighborhood of $\rho$. Then the compactness of $X$ allows to choose a finite subcover of these $O_\rho$, which yields $(a_l, \mathbf b_l, P_l, C_l)$ for $1\leq l\leq L$. This leads to
\begin{equation}\label{eq:full-cover}
 \max_{\rho\in X}\min_l\norm{z-C_l}_{2,\rho}<\varepsilon/8.
\end{equation}
Recall $N=A^\omega$ and we regard $A$ as the constant sequences in $N$.
Since $N=A^\omega$ has real rank zero, choose finite-spectrum
$H_l\in N_{\sa}$ such that $\|H_l-a_l\|<r/8$ and thus $\|H_l-a\|<r/4$. Define $V_l=e^{itH_l}$. Then as in Lemma \ref{lem:all-traces}, we may arrange that 
$E_l=\sigma(V_l)$ are pairwise disjoint.
Moreover, writing $Z_l=\sum_j\psi_j(V_l)b_{l,j}\psi_j(V_l)$ and $F_l=P_l(V_l, Z_l)$, one may make the approximations
accurate enough that
\[
 \norm{F_l-C_l}<\varepsilon/8.
\]
We now verify the hypothesis to apply CPoU. Let $\sigma\in X^\omega$.
For $e_l=|z-F_l|^2\in N_+$, one has $\sigma(e_l)=\|z-F_l\|_{2,\sigma}^2$
restriction of a trace
$\sigma\in X^\omega$ to $A$ and \eqref{eq:full-cover} shows
\[
 \sup_{\sigma\in X^\omega}\min_l\sigma(e_l)
 \leq\varepsilon^2/16<\varepsilon^2/4.
\]
Apply CPoU in \eqref{eq:cpou} with $\delta=\varepsilon^2/4$ and with $S=C^*(A, H_1,\dots, H_L)$ to obtain orthogonal projections
$p_l\in S'\cap N$ with $\sum_lp_l=1$ and
\[
\sigma(p_le_l)\leq(\varepsilon^2/4)\sigma(p_l),\qquad (\sigma\in X^{\omega})
\]
Then we mainly follow the argument in Lemma \ref{lem:all-traces}. Define
\[
 H=\sum_lp_lH_l,\quad B_j=\sum_lp_lb_{l,j},\quad
 V=e^{itH}=\sum_lp_lV_l,\quad
 Z=\sum_j\psi_j(V)B_j\psi_j(V)=\sum_lp_lZ_l.
\]
such that $\|H-a\|\leq \max_l\|H_l-a\|<r/4$ and similarly $\|B_j-b_j\|<r/4$.
Also choose the Laurent polynomials $q_l$ equal to one on $E_l$ and zero
on the other $E_m$. Then $q_l(V)=p_l$, so the symmetrized polynomial
\[
 Q(v,w)=\frac12\sum_l(q_l(v)P_l(v,w)+P_l(v,w)q_l(v))
\]
has $Q(V,Z)=\sum_lp_lP_l(V_l,Z_l)$ and
\[
 \norm{z-Q(V,Z)}_{2,\sigma}^{,2}
 =\sum_l\sigma(p_le_l)
 \leq(\varepsilon^2/4)\sum_l\sigma(p_l)=\varepsilon^2/4.
\]
This estimate holds for all $\sigma\in X^\omega$.
Finally, we lift the elements back to $A$.
denote by $\kappa: \ell^\infty(A)\to N$ the usual quotient map.
Lift the self-adjoint differences $D_a=H-a$ and $D_j=B_j-b_j$ to elements $(d_n)$ and $(e_{j,n})$ in $\ell^\infty(A)$ with norms less than $r/2$ using functional calculus.
Then define $x_n=a+d_n$ and $y_{j, n}=b_j+e_{j,n}$. Then it is a direct calculation that $(z-Q(u(x_n),Y(x_n, \mathbf y_n)))$ represents $z-Q(V, Z)$. Then choose $n_0$ large enough such that
\[\|z-Q(u(x_n),Y(x_n, \mathbf y_n)\|_{2, X}<\varepsilon.\]
\end{proof}

\section{Full spectrum and the final norm generator}\label{sec:final}

For a unital $C^*$-algebra $B$, let $\FS(B)$ be its finite-spectrum
self-adjoint elements. For a nonempty compact set $X\subset\R$, put
\[
 L(X)=\sup\{\operatorname{diam}(I): I\text{ is a connected component of }X\}.
\]
Components of $X$ are compact intervals, including singletons.

\begin{lemma}\label{lem:spectral-distance}
For $a=a^*\in B$,
\[
 \dist(a,\FS(B))\leq\tfrac12 L(\sigma(a)).
\]
\end{lemma}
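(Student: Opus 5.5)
The idea is to build an explicit finite-spectrum element $f(a)$ by continuous functional calculus, where $f$ is a step function on $\sigma(a)$ that is constant on each connected component of $\sigma(a)$. First, if $L(\sigma(a))=\infty$ there is nothing to prove; it cannot happen anyway, since $\sigma(a)$ is compact. Put $\ell=L(\sigma(a))$, fix $\eta>0$, and aim to find $x\in\FS(B)$ with $\norm{a-x}\leq \ell/2+\eta$. Letting $\eta\to0$ then gives the claim. (If $\sigma(a)$ has only finitely many components one can take $\eta=0$, but the general case needs the approximation.)

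\textbf{Step 1 (grouping components).} The components of $\sigma(a)$ are compact intervals $I$ with $\operatorname{diam}(I)\leq\ell$. By compactness, I will find finitely many pairwise disjoint closed intervals $J_1<\dots<J_k$ such that:
\begin{enumerate}
\item each $J_m\cap\sigma(a)$ is relatively clopen in $\sigma(a)$;
\item these pieces cover $\sigma(a)$;
\item $\operatorname{diam}(J_m\cap\sigma(a))\leq \ell+2\eta$.
\end{enumerate}
Concretely, for each component $I$ the intersection of all clopen subsets of $\sigma(a)$ containing $I$ is $I$ itself, since components equal quasi-components in compact Hausdorff spaces. Hence there is a clopen set $C_I\supset I$ contained in the $\eta$-neighbourhood of $I$. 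Finitely many such sets cover $\sigma(a)$, and they can be disjointified into clopen pieces, each still of diameter at most $\ell+2\eta$. A clopen subset of a compact subset of $\R$ with finitely many such pieces is separated by gaps, which gives the intervals $J_m$.

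\textbf{Step 2 (functional calculus).} Let $c_m$ be the midpoint of the convex hull of $J_m\cap\sigma(a)$, and define $f=\sum_m c_m 1_{J_m}$ on $\sigma(a)$. This $f$ is continuous on $\sigma(a)$ because each piece is clopen, and $|f(s)-s|\leq \ell/2+\eta$ for all $s\in\sigma(a)$. Then $x=f(a)\in B$ is self-adjoint with $\sigma(x)=f(\sigma(a))$ finite, so $x\in\FS(B)$, and
\[
\norm{a-x}=\sup_{s\in\sigma(a)}|s-f(s)|\leq \ell/2+\eta.
\]
Only continuous functional calculus is used, so $x$ lies in $B$ itself.

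\textbf{Main obstacle.} The only delicate point is Step 1: producing a clopen partition of $\sigma(a)$ whose pieces have diameter close to $\ell$ when $\sigma(a)$ has infinitely many components, for example Cantor-like pieces. I would handle this via the quasi-component argument above, or more directly as follows. Choose finitely many points not in $\sigma(a)$ as cut points, spaced so that between consecutive cut points the portion of $\sigma(a)$ has diameter at most $\ell+2\eta$. Such cut points exist because any interval of length greater than $\ell$ cannot lie inside $\sigma(a)$, hence contains a point of the complement. Cutting at these points gives the clopen partition directly.
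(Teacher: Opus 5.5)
Your argument is correct and is essentially the paper's proof: partition $\sigma(a)$ into finitely many clopen pieces of diameter at most $L(\sigma(a))+2\eta$, apply the midpoint step function by continuous functional calculus, and let $\eta\to0$. The only difference is how the partition is built. For each component $[l,r]$, the paper picks points $s\in(l-\eta,l)$ and $t\in(r,r+\eta)$ outside $\sigma(a)$, which exist by maximality of the component, and refines a finite subcover of the sets $\sigma(a)\cap(s,t)$; this is more elementary than your quasi-component argument, and your greedy cut-point variant (taking each next cut point in $(c+\eta,c+\ell+2\eta)\setminus\sigma(a)$) is more direct still.
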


\begin{proof}
Put $X=\sigma(a)$ and fix $\eta>0$. Each component $I=[l,r]$ of $X$
has a clopen neighborhood in $X$ of diameter less than $L(X)+2\eta$.
Indeed, choose $s\notin X$ with $l-\eta<s<l$ and $t\notin X$
with $r<t<r+\eta$, using maximality of the component (and points
outside the whole spectrum at an extreme endpoint). Then
$X\cap(s,t)$ is such a neighborhood.
Compactness gives a finite cover by these clopen sets. Refining the
cover gives a finite clopen partition $X=V_1\sqcup\cdots\sqcup V_q$,
each member still having diameter less than $L(X)+2\eta$.
Define $f=(\max V_i+\min V_i)/2$ on $V_i$.
Then $f\in C(X,\R)$ has finite image and
\[
 \norm{a-f(a)}<\tfrac12L(X)+\eta.
\]
Now, let $\eta$ decrease to zero and then one has $ \dist(a,\FS(B))\leq\tfrac12 L(\sigma(a))$.
\end{proof}

If $B$ does not have real rank zero, the following 
provides a fixed $t>0$ and a nonempty norm-open set
$\Omega\subset A_{\sa}$ consisting of $a$ with
$\sigma(e^{ita})=\T$. 

\begin{corollary}\label{cor:open-spectrum}
If $B$ does not have real rank zero, there exist $t>0$ and a nonempty
norm-open ball $\Omega\subset B_{\sa}$ such that
\[
 \sigma(e^{ita})=\T\qquad(a\in\Omega).
\]
\end{corollary}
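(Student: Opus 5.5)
The strategy is to use the characterization that a unital $C^*$-algebra $B$ has real rank zero if and only if $\FS(B)$ is norm-dense in $B_\sa$. Since $B$ does not have real rank zero, there is $a_0\in B_\sa$ and $\delta>0$ with $\dist(a_0,\FS(B))>2\delta$. Let $\Omega$ be the open ball of radius $\delta$ about $a_0$. Then every $a\in\Omega$ satisfies $\dist(a,\FS(B))>\delta$. By Lemma~\ref{lem:spectral-distance}, this forces $L(\sigma(a))>2\delta$. Hence $\sigma(a)$ contains a nondegenerate interval of length greater than $2\delta$.

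Next I choose $t$ so that the exponential wraps any such interval around the whole circle. Concretely, with the convention $\T=\R/\Z$ used for $\rho_j$, I read $e^{ita}$ as $a\mapsto ta \bmod 1$. Take $t>1/(2\delta)$, so that $t\cdot 2\delta>1$. If the exponential is instead meant as $e^{2\pi i s}$-type with period $2\pi$, take $t>\pi/\delta$; only a larger constant changes. For $a\in\Omega$, pick a component $[l,r]\subset\sigma(a)$ with $r-l>2\delta$. By the spectral mapping theorem, $\sigma(e^{ita})=\{e^{its}:s\in\sigma(a)\}\supseteq\{e^{its}:s\in[l,r]\}$. Since $t(r-l)$ exceeds one full period, this image is all of $\T$. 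So $\sigma(e^{ita})=\T$ for every $a\in\Omega$.

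The main point to check is the first step: that $\dist(a,\FS(B))>\delta$ holds uniformly on the ball, which is immediate from the triangle inequality. The other point is that failure of real rank zero means exactly the non-density of finite-spectrum self-adjoints. This is the standard equivalence of Brown--Pedersen: real rank zero is equivalent to density of self-adjoint elements with finite spectrum. So the whole argument reduces to these two observations together with Lemma~\ref{lem:spectral-distance}.
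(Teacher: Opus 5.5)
Your proof is correct and follows essentially the same route as the paper. You pick $a_0$ at positive distance from $\FS(B)$, which is non-density of finite-spectrum self-adjoints, pass to a small ball by the triangle inequality, apply Lemma~\ref{lem:spectral-distance} to get a long interval component of $\sigma(a)$, and choose $t$ so that $s\mapsto e^{its}$ wraps that interval around $\T$. The only differences are in the constants: the paper uses ball radius $\delta/4$ and $t=2\pi/\delta$, and it uses the ordinary $2\pi$-periodic exponential, so your choice $t>\pi/\delta$ is the relevant one.
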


\begin{proof}
Since $B$ has no real rank zero, one chooses $a_0\in B_{\sa}$ with
$\delta=\dist(a_0,\FS(B))>0$. Define
\[
 \Omega=\{a\in B_{\sa}:\norm{a-a_0}<\delta/4\},
 \qquad t=2\pi/\delta.
\]
For $a\in\Omega$, the distance to $\FS(B)$ exceeds $3\delta/4$.
Lemma~\ref{lem:spectral-distance} gives
$L(\sigma(a))>3\delta/2$. Thus $\sigma(a)$ has an interval component $[l, r]$
of length greater than $\delta$. This implies that
\[\T\subset \{e^{its}:s\in \sigma(a)\}=\sigma(e^{ita})\subset \T.\]
\end{proof}

For real rank zero, we need the following lemma.
\begin{lemma}\label{lem:repair}
Let $A$ be simple, unital, infinite dimensional, and of real rank zero.
Set $\mathcal S=\{a=a^*\in A:\sigma(e^{ia})=\T\}$.
This is a nonempty norm-closed set. If $a\in\mathcal S$ and
$\norm{\widetilde a-a}<r/8$ for $\widetilde a\in A_\sa$, then, for every $\delta>0$, there exists
$a_2\in\mathcal S$ with
\[
 \norm{a_2-a}<r/2,\qquad \norm{a_2-\widetilde a}_{2,T(A)}<\delta.
\]
\end{lemma}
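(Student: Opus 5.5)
The plan is to prove the two preliminary assertions about $\mathcal S$ directly, and then build $a_2$ as a finite-spectrum approximant of $\widetilde a$ into which small-trace "spectrum-filling" blocks have been inserted. Throughout, $A$ is assumed to have $T(A)\neq\varnothing$, as in Lemma~\ref{lem:all-traces}, so that $\norm{\cdot}_{2,T(A)}$ is meaningful.

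\emph{Closedness.} For normal (in particular unitary) elements the Hausdorff distance between spectra is bounded by the norm distance. If $a_n\in\mathcal S$ and $a_n\to a$, then $e^{ia_n}\to e^{ia}$, so $\sigma(e^{ia})$ is a compact set that is arbitrarily close to $\T$ in Hausdorff distance. Hence $\sigma(e^{ia})=\T$ and $\mathcal S$ is closed.

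\emph{Nonemptiness.} Since $A$ is simple, infinite dimensional and of real rank zero, every nonzero projection $p$ has $pAp$ infinite dimensional of real rank zero. So $p$ is not minimal and splits as $p=p'+p''$ with $p',p''$ nonzero orthogonal projections. Iterating this binary splitting starting from $1_A$ produces a unital embedding $C(\mathcal C)\hookrightarrow A$, where $\mathcal C$ is the Cantor set. Choose a continuous surjection $g\colon\mathcal C\to[0,2\pi]$ and take $a=g$. Then $\sigma(a)=[0,2\pi]$, so $\sigma(e^{ia})=\T$. The same construction inside any corner $fAf$ gives, for any interval $J$, an element $c=c^*\in fAf$ whose spectrum in $fAf$ equals $J$.

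\emph{Main construction.} Let $a\in\mathcal S$, $\norm{\widetilde a-a}<r/8$ and $\delta>0$. Fix $\eta$ with $0<\eta<\min(\delta/2,r/16)$. By real rank zero, choose
\[
 b=\sum_{k=1}^m\lambda_ke_k\in\FS(A),\qquad \norm{b-\widetilde a}<\eta,
\]
where the $e_k$ are nonzero orthogonal projections summing to $1$. Then $\norm{b-a}<r/8+\eta<3r/16$. Because $\sigma(e^{ia})=\T$, the finite set $\{e^{i\lambda_k}\}$ is $3r/16$-dense in $\T$ (in arc length), so the arcs $e^{iJ_k}$ with $J_k=[\lambda_k-r/4,\lambda_k+r/4]$ cover $\T$.

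Next, in each $e_kAe_k$ choose a nonzero projection $f_k\le e_k$ with $\sup_{\tau\in T(A)}\tau(f_k)<\gamma/m$, for a small $\gamma$ to be fixed. In $f_kAf_k$ take $c_k$ with spectrum $J_k$, using the Cantor construction above. Define
\[
 a_2=\sum_k\lambda_k(e_k-f_k)+\sum_kc_k .
\]
Then $\sigma(a_2)\supseteq\bigcup_kJ_k$, so $\sigma(e^{ia_2})=\T$ and $a_2\in\mathcal S$. Moreover $a_2-b=\sum_k(c_k-\lambda_kf_k)$ is a sum of blocks on the orthogonal projections $f_k$, each of norm $\le r/4$. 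Hence $\norm{a_2-a}\le r/4+3r/16<r/2$. For the trace estimate,
\[
 \norm{a_2-b}_{2,\tau}^2\le (r/4)^2\,\tau\Bigl(\sum_kf_k\Bigr)<(r/4)^2\gamma .
\]
Choosing $\gamma$ so that $(r/4)\gamma^{1/2}<\delta/2$ gives
\[
 \norm{a_2-\widetilde a}_{2,T(A)}\le\norm{a_2-b}_{2,T(A)}+\norm{b-\widetilde a}<\delta .
\]

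\emph{Main obstacle.} The step I expect to be hardest is producing projections $f_k\le e_k$ that are nonzero but have \emph{uniformly} small trace over all of $T(A)$; the naive binary halving only controls one trace at a time. My first attempt would be the standard fact for simple non-elementary $C^*$-algebras: each hereditary subalgebra $e_kAe_k$ contains $N$ pairwise orthogonal, mutually Cuntz-equivalent nonzero positive elements. This follows from simplicity and non-elementarity by a Glimm-type argument, which I would cite rather than reprove. If $x$ is one of these, then $N\,d_\tau(x)\le 1$ for every $\tau$, so $d_\tau(x)\le 1/N$. Real rank zero then supplies a nonzero projection $f_k$ in $\overline{xAx}$, which satisfies $\tau(f_k)\le d_\tau(x)\le 1/N$ uniformly in $\tau$. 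Taking $N>m/\gamma$ gives $\tau(f_k)<\gamma/m$ for all $\tau$, as needed.
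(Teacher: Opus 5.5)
Your proposal is correct and follows essentially the same route as the paper. You approximate $\widetilde a$ by a finite-spectrum element, place Cantor-built blocks with interval spectrum of radius below $r/4$ on nonzero subprojections of uniformly small trace inside each spectral projection so that $e^{i\sigma(a_2)}$ covers $\T$, and obtain the $2$-norm bound from those small traces. Your Glimm-halving plus real-rank-zero argument for the projections $f_k\le e_k$ with uniformly small trace gives a justification that the paper only asserts (``by looking at $pAp$''), and your Hausdorff-distance argument likewise supplies the closedness of $\mathcal S$ that the paper calls ``a direct calculation''.
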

\begin{proof}
First, since $A$ is simple and of real rank zero,  for any nonzero projection $p\in A$ and any $n\in \N_+$, there exists $q\leq p$ such that $\sup_{\tau\in T(A)}\tau(q)\leq 1/n$. This can be done by looking at $pAp$.

Now, for a non-zero projection $q\in A$ and $R>0$, we will choose a self-adjoint $h\in qAq$ such that $\sigma_{qAq}=[-R, R]$.
We work in $qAq$ with the unit $q$. It allows to write $q=q_0+q_1$ such that $q_0$ and $q_1$ are orthogonal. We recursively do this splitting process for $q_0$ and $q_1$ and so on to obtain a family $\mathcal P=\{p_s: s\in \{0, 1\}^n: n\in \N\}$ of projections such that $\{p_s: |s|=n\}$ is an orthogonal family. This implies that the $C^*$-algebra generated by $\mathcal P$, as a unital $C^*$-subalgebra in $qAq$, is isomorphic to $C(\{0, 1\}^\N)$. Then the function 
\[f(x)=-R+2R\cdot \sum_{k=1}^\infty\frac{x_k}{2^k}\]
corresponds to the desired $h\in qAq$ with $\sigma_{qAq}(h)=\sigma(f)=[-R, R]$.

In particular, for $q=1_A$ and $R=\pi$, there exists $a\in A$ with $\sigma(a)=[-\pi, \pi]$ and thus $\sigma(e^{ia})=\T$. Thus $\mathcal S$ is not empty. Then let $a_n\in \mathcal S$ converges to $a$, a direct calculation shows that $a\in \mathcal S$ as well.

Now we construct the element $a_2$. For the $a, \widetilde a$ be given.
Choose a finite-spectrum $c=\sum_{i=1}^m\alpha_i e_i$ with
\[
 \norm{c-\widetilde a}<\min\{r/8,\delta/2\}.
\]
Then $d=\norm{c-a}<r/4$. Choose $d<R<r/4$ and $n$ large enough such that $R\sqrt{m/n}<\delta/2$. T
In each nonzero $e_i$ choose a nonzero projection $q_i\leq e_i$ such that $\sup_\tau \tau(q_i)\leq 1/n$ and thus
\[
 R\Bigl(\sup_\tau\sum_i\tau(q_i)\Bigr)^{1/2}< R \sqrt{m/n}\leq \delta/2.
\]
Choose $h_i\in(q_iAq_i)_{\sa}$ with spectrum $[-R,R]$. Define $a_2=c+\sum_i h_i$.
Then one has $\norm{a_2-a}\leq R+d<r/2$ and
$\norm{a_2-\widetilde a}_{2,T(A)}<\delta$ because $h_i$ are orthogonal and 
\[\|\sum_i h_i\|^2_{2, T(A)}=\sup_{\tau\in T(A)}\sum_i\tau(h_i^2)\leq \sup_\tau R^2\tau(q_i)\leq R^2m/n.\]
The spectrum of $a_2$ contains every $[\alpha_i-R,\alpha_i+R]$.
Every point of $\sigma(a)$ is within $d<R$ of some $\alpha_i$,
so spectral mapping and $\sigma(e^{ia})=\T$ imply
$\sigma(e^{ia_2})=\T$.
\end{proof}

Now, we are ready to prove our main theorem.
\begin{proof}[Proof of Theorem~\ref{thm:main}]
If $A$ is finite dimensional, simplicity makes it a matrix algebra,
which is singly generated.

In the infinite dimensional case, still write $X=T(A)$. If $A$ has no real rank zero, define $B=\Omega\times A_\sa^{23}$, where $\Omega$ is the nonempty norm-open ball in $A_\sa$ in Corollary \ref{cor:open-spectrum}. Otherwise, Define $B=\mathcal S\times A^{23}_\sa$, where $\mathcal S$ is obtained in Lemma \ref{lem:repair}. In any cases, $B$ is always a Baire space.

By separability of $A$, one may choose a norm-dense sequence $(z_n)$ of self-adjoint contractions in the self-adjoint unit ball of $A$.
For each $n,k$, define an open set  $U_{n,k}$ to be
\[\{(a, \mathbf b)\in B: \text{there exists a self-adjoint}*\text{-polynomial P such that } \norm{z_n-P(u(a),Y(a,\mathbf b))}_{2,X}<1/k\},\]
which is open,  because $(a, \mathbf b)\mapsto  \norm{z_n-P(u(a),Y(a,\mathbf b))}_{2,X}$ is continuous.

Suppose $A$ has no real rank zero. Let $(a, \mathbf b)\in B$, and choose a small $r>0$ such that if $a'$ satisfies $\|a'-a\|<r$, then $a'\in \Omega$.

Then apply Proposition \ref{prop:uniform-patch} to $z_n$, $r$ and $1/k$ to obtain a tuple $(a', \mathbf b')$ and a self-adjoint polynomial $P$ such that 
\[\|z_n-P(u(a'), Y(a', \mathbf b'))\|_{2, X}<1/k.\]
Note that $(a', \mathbf b')\in B$ and this implies that any neighborhood of $(a, \mathbf b)$ meets $U_{n,k}$.

In the real rank zero case, for $(a, \mathbf b)\in B$, apply Proposition \ref{prop:uniform-patch} to $z_n$, $r$ and $1/2k$ to obtain a tuple $(\tilde{a}, \tilde{\mathbf{b}})$ and a self-adjoint polynomial $P$ such that 
\[\|z_n-P(u(\tilde{a}), Y(\tilde{a}, \tilde{\mathbf b}))\|_{2, X}<1/2k.\]
Then it follows from Lemma~\ref{lem:repair} and continuity that there exists a $a_2\in \mathcal S$ such that $\|\tilde{a}-a_2\|<r/2$ and $\|\tilde{a}-a_2\|_{2, T(A)}$ small enough making $\|P(u(a_2), Y(u(a_2),\tilde{\mathbf{b}}))-P(u(\tilde{a}), Y(\tilde{a}, \tilde{\mathbf b}))\|<1/2k$. Therefore, this implies
\[\|z_n-P(u(a_2), Y(u(a_2),\tilde{\mathbf{b}}))\|_{2, T(A)}<1/k.\]

Combining these two cases, one has $U_{n,k}$ are dense open. Then Baire's theorem yields $(a,\mathbf b)\in \bigcap_{n,k}U_{n,k}$. Define
\[
 u=e^{ita},\quad \sigma(u)=\T,\qquad
 y=\sum_j\psi_j(u)b_j\psi_j(u),
\]
and write $B=C^*(1,u,y)$. For any $n, k$, there exists a polynomial $P_{n,k}$ such that
\[\|z_n-P_{n,k}(u,y)\|_{2,X}<1/k.\]
Now, let $z\in A_\sa$ with $\|z\|\leq 1$ and $\eta>0$. Choose $n, k$ large enough such that $\|z-z_n\|<\eta/2$ and $k>2/\eta$, and  thus
\[\|z-P_{n,k}(u,y)\|_{2, X}\leq \|z-z_n\|+\|z_n-P_{n,k}(u,y)\|_{2, X}<\eta.\]
This implies that $B$ is uniformly $2$-norm dense in $A$. Moreover, define $\chi(s)=\max\{-1,\min\{s, 1\}\}$ and $c=\chi(P_{n,k}(u,y))\in B_\sa$ with $\|c\|\leq 1$. A calculation shows 
\[\|z-P_{n,k}(u,y)\|_{2,\rho}\leq\|z-c\|_{2,\rho}.\]
This means for any $z\in A^1_\sa$, and $\eta>0$, there exists a $c\in B^1_\sa$ such that $\|z-c\|_{2, X}<\eta$.

Finally, let $f\in C(\T)_+\setminus\{0\}$. Since $\sigma(u)=\T$, the element $f(u)\neq 0$ and $\rho(f(u))>0$ for any $\rho\in X$. Because $X$ is compact, this implies that
\[
 \inf_{\rho\in X}\rho(f(u))>0.
\]
Together with the strict comparison for positive elements, we have verified all hypotheses of \cite[Theorem 5.1]{ELN}. That theorem gives
$A=C^*(u(R1+h))$ for some self-adjoint $h\in A$ and $R>\norm h$.
\end{proof}

\section{Acknowledgement}
The author would like to thank Yongle Jiang for bringing his attention to the generator problem for $C^*$-algebras and several helpful discussions. Some partial results was obtained before the announcement of \cite{SG}. During  the development of this work, the author used  GPT 6 Astra for searching necessary references, examining mathematical arguments, and language editing.  The authors have checked and substantially revised all mathematical arguments generated by AI and take full responsibility for the final version.


\begin{thebibliography}{TC}
\bibitem{AGEP}T. Amrutam, D. Gao, S. K. Elayavalli, G. Patchell. Strict comparison in reduced group C*-algebras. \textit{Invent. Math.} \textbf{242} (2025), 639-657.


\bibitem{BCC}B. Bouwen, J. Castillejos, and J. Curda. The generator problem fror $C^*$-algebras  with comparison. arXiv:2610.13792.

\bibitem{TC}
J.~R.~Carri\'on, J.~Castillejos, S.~Evington, J.~Gabe,
C.~Schafhauser, A.~Tikuisis, and S.~White.
Tracially complete $C^*$-algebras. To appear in \textit{Mem. Amer. Math. Soc.}. arXiv:2310.20594.

\bibitem{ELN1}G.~A.~Elliott, C.~G.~Li, and Z.~Niu. A simple spearable $C^*$-algebra which is not singly generated. arXiv:2608.13707.

\bibitem{ELN}
G.~A.~Elliott, C.~G.~Li, and Z.~Niu.
On the generators of separable unital $C^*$-algebras. arXiv:2610.17411.

\bibitem{LNR}
C.~G.~Li, Z.~Niu and V. M. Ruzicka. Villadsen algebras are singly generated. To appear in \textit{J. Noncommut. Geom.} arXiv: 2604.07688.

\bibitem{LS}W. H. Li and J. H. Shen. A note on approximately divisible $C^*$-algebras. arXiv:0804.0465.

\bibitem{MWY}X. Ma, D. Wang, and W. Yang. Boundary actions of Bass-Serre trees and applications to $C^*$-algebras. To appear in \textit{J. Noncommut. Geom.}. arXiv: 2502.02039.

\bibitem{OZ}C. L. Olsen and W. R. Zame. Some $C^*$-algebras with a single generator. \textit{Trans. Amer. Math. Soc.} \textbf{215} (1976), 205-217.



\bibitem{SG}
OpenAI. Relative generation and the generator problem for finite
factors. September 23, 2026.
\href{https://github.com/openai/math/blob/main/preprints/Relative-generation-and-the-generator-problem-for-finite-factors-September-23-2026/paper.pdf}{Manuscript}.

\bibitem{O}N. Ozawa. Proximality and selflessness for group $C^*$-algebras. arXiv: 2508.07938.

\bibitem{R}L. Robert. Selfless $C^*$-algebras. \textit{Adv. Math.} \textbf{478}(2025), 110409.

\bibitem{STW}
C.~Schafhauser, A.~Tikuisis, and S.~White.
Nuclear $C^*$-algebras: 99 problems. arXiv:2506.10902v2.



\bibitem{TW}
H. Thiel and W. Winter. The generator problem for $\mathcal Z$-stable $C^*$-algebras. \textit{Trans. Amer. Math. Soc.} \textbf{366} (2014), 2327-2343.

\bibitem{T}D. M. Topping. UHF algebras are singly generated. \textit{Math. Scand.} \textbf{22} (1968), 224-226.

\bibitem{Y}W. Yang. An extreme boundary of acylindrically hyperbolic groups. arXiv:2511.16400.
\end{thebibliography}
\end{document}